\documentclass[10pt]{article}

\usepackage{amsmath}
\usepackage{array}
\usepackage{amsfonts}
\usepackage{amssymb}
\usepackage{amsthm}
\usepackage{epsfig,graphics,graphicx}
\usepackage{stmaryrd}

\usepackage{hyperref}

\usepackage{tikz}
\usepackage{wrapfig} 
\usepackage[all]{xy}

\usepackage[babel=true]{csquotes}

  \theoremstyle{plain}
\newtheorem{theorem}{Theorem}[section]
\newtheorem{proposition}[theorem]{Proposition}
\newtheorem{corollary}[theorem]{Corollary}
\newtheorem{lemma}[theorem]{Lemma} %%%%%
  \theoremstyle{remark}
\newtheorem{remark}[theorem]{Remark}
  \theoremstyle{definition}
\newtheorem{definition}[theorem]{Definition}
\newtheorem*{remarks}{Remarks}
\newtheorem*{note}{Note}

\newcommand{\ie}{\textit{i.e. }}
\newcommand{\G}{\mathcal{G}}
\newcommand{\NN}{\mathbb{N}}
\newcommand{\ZZ}{\mathbb{Z}}
\newcommand{\RR}{\mathbb{R}}
\renewcommand{\SS}{\mathbb{S}^1}
\renewcommand{\i}{\iota}

\title{Gauss diagrams of real and virtual knots in the solid torus}
\author{Arnaud Mortier \\ \itshape mortier@math.ups-tlse.fr}
\date{\today}

\begin{document}

\maketitle

\begin{abstract}
\footnotesize
Gauss diagrams in knot theory were introduced by Polyak and Viro (\cite{PV}) as an appropriate device to describe finite type invariants. As a by-product, they naturally gave rise to the fruitful theory of virtual knots, introduced and developed by Kauffman (\cite{K1}, \cite{K2}).

The purpose of this article is to define a new type of Gauss diagrams, adapted from the decorated diagrams introduced by Fiedler (\cite{F1}, see also \cite{F2}) to describe knots in the solid torus with projections in $\RR\times\SS$. We see that it provides an efficient tool for showing that a knot diagram can be fully recovered from its decorated Gauss diagram, and we use it to establish a characterization of the decorated Gauss diagrams of closed braids.
\end{abstract}

\tableofcontents

\section{Introduction: what is a Gauss diagram?}
\label{sec:Introduction}

We define three versions of Gauss diagram theories, beginning with the classical settings and gradually refining them, and we study their basic properties.
When Gauss diagrams are considered as topological objects (for instance when we look at their first homology group), remember that the arrows only \textit{look like} they intersect.
The word \enquote{real} will be used as the opposite of \enquote{virtual} - we save the word \enquote{classical} for knot diagrams in $\RR^2$. The author deeply apologizes to the reader who is used to real knots in physics terminology.

\subsection{The classical case and the birth of virtual knot theory}

A \textit{classical Gauss diagram} is an oriented circle in which a finite number of couples of points are linked by an abstract, signed and oriented arrow. Starting with a classical knot diagram $D$ in $\RR^2$, one obtains the \textit{associated Gauss diagram} by considering a parametrization of $D$ by an oriented circle, and connecting the preimages of each crossing by an arrow oriented from the underpassing to the overpassing point, given as a sign the writhe number of the crossing (see Fig.\ref{1}).

\begin{figure}[h]
\centering 
\psfig{file=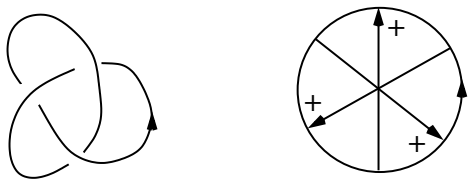,scale=0.9}
\caption{A classical Gauss diagram}\label{1}
\end{figure}

Now a natural question is: \enquote{is any classical Gauss diagram associated to some knot?}, and the answer is no. The simplest example is pictured on \mbox{Fig.\ref{2}:} try to draw a corresponding knot diagram, you will soon find it necessary to add a crossing where no arrow allows it.

\begin{figure}[h!]
\centering 
\psfig{file=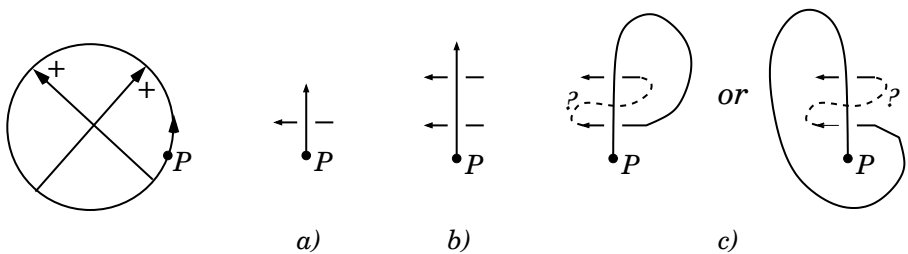}
\caption{This one cannot come from a knot}\label{2}
\end{figure}

This is how virtual knot theory starts: add whichever crossings you need to complete the picture, and draw a circle around them, to notify that these are not regular crossings. Those so-called \textit{virtual} crossings are subject to a new set of Reidemeister moves, precisely those which leave the underlying Gauss diagram unchanged (in particular, the last move depicted on Fig.\ref{3} \mbox{is forbidden !).}

\begin{figure}[h!]
\centering 
\psfig{file=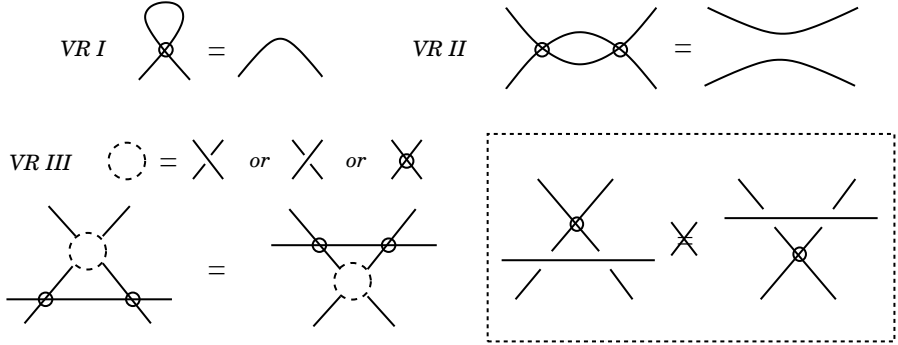,scale=0.8}
\caption{Virtual Reidemeister moves}\label{3}
\end{figure}
\noindent This set of \textit{virtual Reidemeister moves} is satisfactory because of the following:

\begin{lemma} [Fundamental property of virtual crossings] \label{fund}
Any two knot diagrams which differ only between two points, such that the two different arcs are homotopic and contain only virtual crossings, are equivalent under virtual Reidemeister moves.
\end{lemma}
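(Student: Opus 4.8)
The plan is to turn the abstract homotopy between the two arcs into an explicit, combinatorially controlled sequence of diagram moves, and then to check that each elementary step of this sequence is one of the allowed virtual Reidemeister moves. Write $p,q$ for the two points outside of which the diagrams agree, call $D'$ the common part of the diagram, and let $\alpha,\beta$ be the two arcs joining $p$ to $q$. By hypothesis $\alpha$ and $\beta$ are homotopic relative to $\{p,q\}$ and meet both $D'$ and themselves only at virtual crossings.

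First I would fix a homotopy $H\colon[0,1]\times[0,1]\to\Sigma$ (where $\Sigma$ is the ambient surface, either $\RR^2$ or $\RR\times\SS$) with $H(\cdot,0)=\alpha$, $H(\cdot,1)=\beta$ and $H(0,\cdot)\equiv p$, $H(1,\cdot)\equiv q$, and put it in general position with respect to the fixed diagram $D'$ and with respect to its own self-intersections. After this perturbation, for all but finitely many $t$ the trace $\gamma_t:=H(\cdot,t)$ together with $D'$ forms an honest diagram $D_t$, once we decree every crossing lying on $\gamma_t$ to be virtual; and the combinatorial type of $D_t$ is constant between consecutive critical times, so those intervals contribute nothing.

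Then I would inspect the finitely many critical times and match each with an allowed move. A cusp (Whitney) event on $\gamma_t$ produces a virtual R1 move; a tangency of $\gamma_t$ with $D'$ or with itself creates or removes two crossings, both virtual since they lie on $\gamma_t$, hence a virtual R2 move; the triple points are the only delicate case. At a triple point three local strands coincide momentarily, and since $D'$ is fixed, no triple point can be internal to $D'$: the three strands consist of three or two pieces of $\gamma_t$, or of one piece of $\gamma_t$ meeting two strands of $D'$. In the first two cases \emph{every} crossing of the move lies on $\gamma_t$ and is therefore virtual, giving a virtual R3 move; in the last case exactly one crossing, the $D'$--$D'$ crossing, may be real, and the move is precisely the mixed \enquote{detour} R3 move, which is allowed because it leaves the underlying Gauss diagram unchanged.

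The main obstacle is to confirm that the forbidden move (the last one on Fig.\ref{3}) never intrudes. This is exactly where the hypothesis that the arcs carry only virtual crossings is decisive: since $\gamma_t$ never bears a real crossing, it always slides \emph{across} a real crossing of $D'$ by the detour move, never over or under it, so the forbidden configuration is excluded throughout the homotopy. Concatenating the moves read off the successive critical times then exhibits $D_0$ and $D_1$ as equivalent under virtual Reidemeister moves, which is the claim. One should also note that in the classical plane case the homotopy hypothesis is automatic, since $\RR^2$ is simply connected, so the genuine content of the statement is felt only on surfaces such as $\RR\times\SS$.
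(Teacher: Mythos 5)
Your argument is correct, but note that the paper itself offers no proof of this lemma at all: it is stated as the standard \enquote{detour move} property from virtual knot theory (Kauffman, \cite{K2}), so there is nothing in the text to compare your write-up against line by line. What you have written is essentially the canonical proof one would supply: put a rel-endpoints homotopy between the two arcs in general position with respect to the fixed part $D'$ and itself, and identify the codimension-one events (cusps, tangencies, triple points) with virtual R1, R2, R3 and the mixed move. Your case analysis at triple points is the right one and correctly isolates the only delicate point, namely that the moving strand carries only virtual crossings, so the mixed move that occurs is always the allowed one (the moving strand has two virtual crossings and slides past a possibly real crossing of $D'$, which leaves the Gauss diagram unchanged) and never the forbidden one (which would require real crossings on the moving strand). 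Two very minor points you gloss over: the genericity statement for homotopies of curves relative to a fixed diagram (finitely many singular instants, each of one of the listed local types) is itself a transversality theorem that deserves at least a citation; and one should say a word about why the homotopy can be taken to fix small neighbourhoods of $p$ and $q$, since the two arcs may leave those points in different directions. Neither affects the substance, and your final remark that the homotopy hypothesis is vacuous in $\RR^2$ but not in $\RR\times\SS$ is exactly the right thing to observe given how the lemma is used later in the paper.
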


This property means that the only relevant datum in a virtual arc is where it is from, and where it is going to, regardless of what it does in the meantime. In other words, virtual knot diagrams up to virtual Reidemeister moves contain precisely as much information as their Gauss diagrams do:

\begin{corollary}
Two virtual knot diagrams are equivalent under virtual Reidemeister moves if and only if they have the same Gauss diagram.
\end{corollary}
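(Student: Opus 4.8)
The plan is to prove the two implications separately, with almost all of the work lying in the converse direction.

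For the forward direction (equivalent diagrams have the same Gauss diagram), I would simply unwind the definition. By construction, the virtual Reidemeister moves are exactly those local modifications that leave the underlying Gauss diagram untouched; hence each single move preserves the Gauss diagram, and therefore so does any finite composition of them (a planar isotopy of the diagram likewise leaves the real crossings and their cyclic arrangement unchanged). Thus two diagrams related by virtual moves necessarily carry the same Gauss diagram, and this direction requires no argument beyond the definition.

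For the converse, suppose $D$ and $D'$ share a Gauss diagram $\G$ with $n$ arrows. The strategy is to produce a single \emph{normal form} diagram $N_\G$ depending only on $\G$, and to show that every diagram with Gauss diagram $\G$ is virtually equivalent to $N_\G$; equivalence of $D$ and $D'$ then follows by transitivity. To build $N_\G$, I would fix $n$ small disjoint disks in the plane, place inside each one a standard transverse crossing carrying the sign and over/under data read off from the corresponding arrow of $\G$, and then join the $2n$ resulting strand-ends on the boundaries of these disks by arcs following the cyclic order prescribed by $\G$, letting these arcs meet only in virtual crossings.

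It remains to show that an arbitrary diagram $D$ with Gauss diagram $\G$ can be brought to $N_\G$. First I would apply an ambient isotopy of the plane carrying the real crossings of $D$, together with small neighbourhoods, onto the standard disks of $N_\G$ with matching local pictures; this is possible precisely because the cyclic order of the real crossings along the parametrization, together with their signs and over/under information, agrees with the data recorded in $\G$. After this step, $D$ and $N_\G$ have identical real crossings in identical positions and differ only along the arcs joining consecutive crossing-disks — arcs which, by definition of the Gauss diagram, contain only virtual crossings. Since the plane is simply connected, the two competing arcs on each segment share their endpoints and are therefore homotopic, so Lemma~\ref{fund} applies and lets me replace the arc of $D$ by that of $N_\G$. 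Performing this one segment at a time transforms $D$ into $N_\G$.

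The main obstacle is the isotopy step that aligns the real crossings: one must check that the common combinatorial data of $\G$ really do suffice to realize a \emph{simultaneous} ambient isotopy matching all crossings at once, and to make precise the sense in which the arcs of $D$ between crossing-disks can be taken to carry only virtual crossings (so that the hypotheses of Lemma~\ref{fund} are genuinely met). Once this normalization is in place, the essential point — that the wild behaviour of the virtual arcs is immaterial — is exactly the content of Lemma~\ref{fund}, which collapses the problem to the triviality of homotopy of arcs with fixed endpoints in the plane.
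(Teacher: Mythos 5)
Your proof is correct and follows essentially the same route as the paper: the corollary is treated there as an immediate consequence of Lemma~\ref{fund} (and is later said to be contained in Proposition~\ref{Tunique}, whose proof is exactly your normal-form construction --- place the real crossings anywhere, join them by arcs with only virtual crossings, and observe that the choices are immaterial by Lemma~\ref{fund}). The isotopy step you flag is unproblematic, since an orientation-preserving ambient isotopy of the plane carries any configuration of disjoint crossing-neighbourhoods with prescribed local data onto any other, and in $\RR^2$ the homotopy hypothesis of Lemma~\ref{fund} is automatic.
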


\subsection{Decorated Gauss diagrams of knots in the solid torus}

From now on, a knot diagram will be a virtual knot diagram in $\RR\times\SS$. Although we usually speak about virtual knot theory in the solid torus, these are \textbf{not} projections of anything living in a solid torus - unless there is no virtual crossing. We recall the definition of a decorated Gauss diagram given by Fiedler in this context:

\begin{definition}\label{Fiedler}
A \textit{decorated Gauss diagram} is defined as a classical Gauss diagram with the additional datum of a \textit{valuation} - that is a signed integer - to each arrow, and to the whole circle itself.\\ Let $A$ be an arrow in a Gauss diagram. The loop which goes along the orientation of the knot from the head of $A$ to its tail, and then back to the head along $A$ (see Fig.\ref{4}) is called the \textit{distinguished loop} associated to $A$.
Similarly, if $c$ is a real crossing of a virtual knot, then the loop which goes from the overpassing point to the underpassing, along the orientation of the knot, is called the \textit{distinguished loop} associated to $c$.
\end{definition}

\begin{figure}[h!]
\centering 
\psfig{file=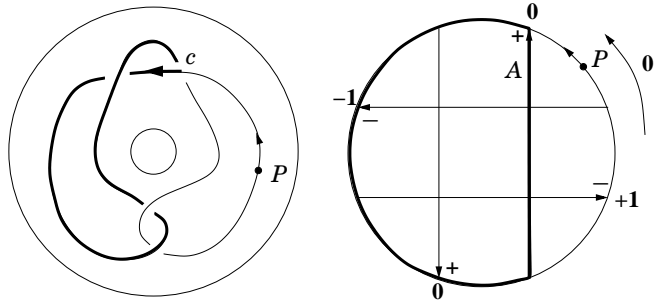,scale=1}
\caption{The decorated Gauss diagram of a knot, and a distinguished loop}\label{4}
\end{figure}

One can still associate a decorated Gauss diagram to a knot diagram: repeat the classical construction, then define the valuations of the arrows as the homology classes in $H_1\left( \RR\times\SS\right) $ of the distinguished loops of the corresponding crossings. The valuation of the circle is the homology class of the whole knot.

\begin{proposition}\label{basic} Every decorated Gauss diagram is represented by a virtual knot.
\end{proposition}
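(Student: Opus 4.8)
The plan is to build, directly from the combinatorial data, a virtual knot diagram in $\RR\times\SS$ whose underlying classical Gauss diagram is the given one and whose distinguished loops carry the prescribed homology classes. I would separate the construction into two independent tasks: realizing the \emph{combinatorics} (which arrows, with which signs and orientations, in which cyclic order) by genuine real crossings, and realizing the \emph{homology} (the valuations) by choosing how the connecting arcs wind around the $\SS$ factor. The whole point is that Lemma \ref{fund} decouples these two tasks: since any arc carrying only virtual crossings may be re-routed freely within its homotopy class without changing the underlying Gauss diagram, we are free to prescribe the winding of every connecting arc independently of the crossing pattern.

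Concretely, let $A_1,\dots,A_N$ be the arrows, with $2N$ endpoints $p_1,\dots,p_{2N}$ read off in cyclic order on the circle, and fix a basepoint. First I would place $N$ disjoint small disks in $\RR\times\SS$, drawing inside the $i$-th disk a single transverse real crossing with the sign and over/under convention dictated by $A_i$. It then remains to join the $4N$ loose ends by arcs, following the cyclic order of the circle, declaring every intersection between these arcs (and between an arc and a disk it merely passes through) to be virtual. By Lemma \ref{fund} the resulting virtual knot has exactly the prescribed underlying classical Gauss diagram, whatever the arcs do between their endpoints.

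To fix the valuations, I would control the $\SS$-coordinate. Writing $\theta$ for the (lifted) $\SS$-coordinate along the knot, the valuation of $A_i$ is the net variation of $\theta$ along the arc of its distinguished loop, and the circle's valuation is $\theta(1)-\theta(0)$. Because the two endpoints of a given arrow form a pair and distinct arrows use distinct endpoints, assigning to each arrow its prescribed integer valuation amounts to fixing the difference $\theta(p_k)-\theta(p_j)$ of a \emph{disjoint} pair of node values (up to the constant circle-valuation shift coming from arcs that sweep past the basepoint). Such a system of disjoint difference equations is always solvable, and any choice of node values extends to a closed loop of total winding equal to the prescribed circle valuation; this is precisely the step showing there is \emph{no} hidden relation among the valuations. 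I would then draw the connecting arcs so as to realize these windings, which is always possible since an arc in the annulus can be routed with any prescribed winding at the cost of extra virtual crossings only, and I would feed matching $\RR$-coordinates to the two preimages of each arrow so that they genuinely coincide.

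The step I expect to be the main obstacle — or at least where the real content sits — is the compatibility of the two tasks: one must make sure that pinning down the winding numbers does not obstruct turning each arrow into an honest real crossing, \ie that the two preimages of each arrow can be made to coincide in \emph{both} coordinates while all spurious intersections stay virtual. This is exactly what Lemma \ref{fund} buys us, so the heart of the argument is organizing the construction so that the homology is carried entirely by freely-routable virtual arcs, leaving the real crossings to encode only the combinatorics.
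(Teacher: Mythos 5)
Your construction is correct, but it is organized differently from the paper's. The paper proves Proposition \ref{basic} by induction on the number of arrows: it represents the diagram with one arrow $A$ deleted, then drags a small neighbourhood of one prospective endpoint of $A$ across the other, letting it wind around $\SS$ an appropriate number of times before performing the crossing; since the dragged arc has fixed ends, the total winding of every complete edge --- hence each of the other $n-1$ valuations --- is untouched, so only the new valuation needs adjusting at each step. You instead place all the real crossings at once and solve, in one shot, the linear system imposed on the windings of the connecting arcs by the $n$ arrow valuations and the circle valuation; your observation that this reduces to difference equations on \emph{disjoint} pairs of node values (so there are no hidden relations and the system is solvable over $\ZZ$) is precisely what the induction lets the paper leave implicit, and it is in substance the statement that the distinguished loops together with the circle form a basis of $H_1(G)$, which the paper records separately in Lemma \ref{hom} and exploits again in Proposition \ref{rmov}. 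Both arguments rest on the same freedom: arcs carrying only virtual crossings can be routed, and in particular wound around $\SS$, arbitrarily without changing the underlying Gauss diagram --- indeed, for bare existence you need even less than Lemma \ref{fund}, since declaring every unwanted intersection virtual suffices by the definition of the associated Gauss diagram; the Lemma's full strength is only needed for uniqueness statements such as Proposition \ref{Tunique}. Your global version buys an explicit description of the degrees of freedom; the paper's induction buys brevity.
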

\begin{proof} By induction on the number of arrows:
A diagram with no arrow is just the datum of an integer and may be represented by any totally virtual knot with the required homology class
(actually there is only one such knot, up to virtual Reidemeister moves).
Take an $n$-arrow decorated Gauss diagram, forget one of its arrows, say $A$, and represent the remaining diagram by a knot.
Take two little neighborhoods of what should be the endpoints of $A$ on the knot, and homotope one of them to come near the other and cross it once, with the required writhe,
declaring virtual any crossing added in the process. Making it turn around the circle an appropriate number of times before performing the crossing provides control on the valuation (see Fig.\ref{5}, where dashed parts of the knot have only virtual intersections). Finally, notice that this operation does not affect the $n-1$ other valuations.
\end{proof}

\begin{figure}[h!]
\centering 
\psfig{file=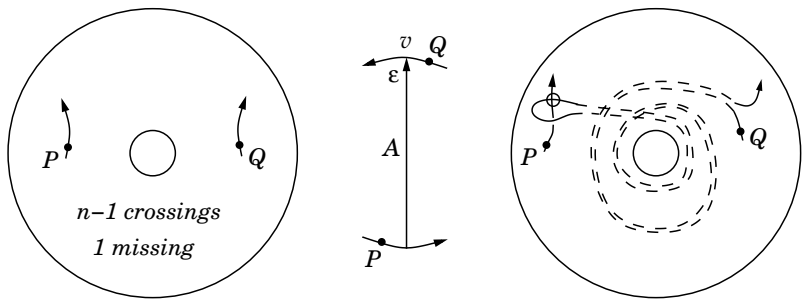,scale=1}
\caption{Adjusting all the parameters of an arrow}\label{5}
\end{figure}

\begin{lemma}\label{hom} Let $\gamma$ be a loop in a decorated Gauss diagram $G$. The homology class of the corresponding loop in $\RR\times\SS$ does not depend on the knot representing $G$.
\end{lemma}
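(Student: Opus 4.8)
The plan is to realize the passage from a loop in $G$ to its corresponding loop in $\RR\times\SS$ as a map induced on first homology by a single continuous map $f_K\colon G\to\RR\times\SS$, and then to evaluate that map on a generating family of $H_1(G)$ whose images are, by definition, the valuations. Regard $G$ as a topological $1$-complex: the oriented circle, subdivided into $2n$ arcs by the $2n$ endpoints of the $n$ arrows, together with the arrows themselves viewed as genuine arcs (recall that arrows only \emph{look like} they intersect). Fix a knot diagram $K$ representing $G$, and define $f_K$ by sending the circle to $K$ via its parametrization and collapsing each arrow to the single point of $\RR\times\SS$ which is the common image of its two endpoints, namely the corresponding real crossing. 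This is continuous, and the loop $\gamma$ in $G$ has for corresponding loop exactly $f_K\circ\gamma$; hence its homology class is $f_{K\ast}[\gamma]$. Since $f_{K\ast}\colon H_1(G)\to H_1(\RR\times\SS)$ is a homomorphism, it is enough to understand it on a generating family of $H_1(G)$.

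The natural family is the $n$ distinguished loops $\ell_A$ together with the class $[C]$ of the whole circle. Their images are prescribed: because the arrow $A$ is collapsed to a point, traversing it in $\RR\times\SS$ is a constant path, so $f_K\circ\ell_A$ is precisely the distinguished loop of the crossing associated to $A$, whence $f_{K\ast}[\ell_A]=\mathrm{val}(A)$; likewise $f_{K\ast}[C]=\mathrm{val}(C)$ is the homology class of the whole knot. Both are data carried by $G$ itself, independent of $K$. Thus once I can write $[\gamma]=\sum_A c_A[\ell_A]+c_0[C]$ with coefficients $c_A,c_0\in\ZZ$ depending only on $G$ and $\gamma$, I obtain $f_{K\ast}[\gamma]=\sum_A c_A\,\mathrm{val}(A)+c_0\,\mathrm{val}(C)$, which visibly does not depend on $K$.

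The technical heart is therefore to show that the $[\ell_A]$ and $[C]$ generate $H_1(G)$. Since $G$ is one-dimensional, $H_1(G)=Z_1(G)$, the group of integral $1$-cycles. Orienting each arrow from tail to head, consider for each arrow $A$ the homomorphism $z\mapsto z_A$ recording the signed number of times a cycle $z$ traverses $A$; by construction $(\ell_A)_B=\delta_{AB}$ and $[C]$ traverses no arrow. Given any cycle $z$, the corrected cycle $z-\sum_A z_A[\ell_A]$ then has vanishing coefficient on every arrow, i.e.\ it is supported on the circle. A cycle supported on the subdivided oriented circle must assign the same coefficient to all consecutive arcs — conservation of flow at each vertex, the arrow terms being absent — so it equals some integer multiple $m[C]$. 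Hence $z=\sum_A z_A[\ell_A]+m[C]$, which proves the claim and, with it, the lemma.

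I expect the only delicate points to be bookkeeping rather than conceptual: checking that $f_K$ is genuinely well defined and continuous at the crossings, and verifying the orientation conventions that give $(\ell_A)_B=\delta_{AB}$, so that the coefficients $c_A,c_0$ emerge as honest invariants of $G$ and $\gamma$. Everything else follows from functoriality of $H_1$ and the definition of the valuations.
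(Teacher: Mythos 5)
Your proof is correct and follows essentially the same route as the paper: identify a generating set of $H_1(G)$ consisting of the circle class and the distinguished loops, observe that the valuations prescribe their images in $H_1(\RR\times\SS)$, and conclude by linearity. The paper simply compresses your explicit cycle-decomposition argument into the remark that $G$ has the homotopy type of a wedge of $n+1$ circles; your version of the generation step and the explicit collapsing map $f_K$ are sound elaborations of the same idea.
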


\begin{proof} From the topological viewpoint, a Gauss diagram with $n$ arrows has the homotopy type of the wedge of $n+1$ circles. Its first homology group is generated by the class of the circle, plus the classes of the distinguished loops associated to the arrows. Thus the information contained in a decorated Gauss diagram determines the image of a homology basis, therefore the image of any loop.
\end{proof}

\subsection{Tangles and $T-$diagrams}

\begin{definition}
A \textit{(virtual) tangle diagram} is an oriented uni- and tetra-valent graph properly embedded in $\RR\times\left[ 0,1\right]$, where each tetravalent vertex has been decorated as a real or virtual crossing. Under the identification of $\RR\times\left\lbrace 0\right\rbrace$ with $\RR\times\left\lbrace 1\right\rbrace$, such a diagram becomes a link diagram in $\RR\times\SS$, and throughout the article we will always assume that our tangles close into \textit{knots}.
Since we look at diagrams rather than any kind of equivalence classes, this definition is equivalent to virtual knot diagrams in $\RR\times\SS$ that are transverse to some specified section $S=\RR\times \left\lbrace t\right\rbrace$. We will refer to either point of view without distinction.
\end{definition}

\begin{definition} A \textit{T-diagram} is a decorated Gauss diagram together with an additional decoration by signed markings (as shown on Fig.\ref{6}), required to be away from the arrows,
such that the valuation of any arrow is equal to the sum of the markings met by its distinguished loop.
It is said to be \textit{positive} if it has at least one marking and all of them are positive, and textit{non-negative} if it is positive or has no marking.

A $T-$diagram which becomes $G$ when we forget about its markings is called a \textit{refinement} of $G$.
\end{definition}

\begin{figure}[h!]
\centering 
\psfig{file=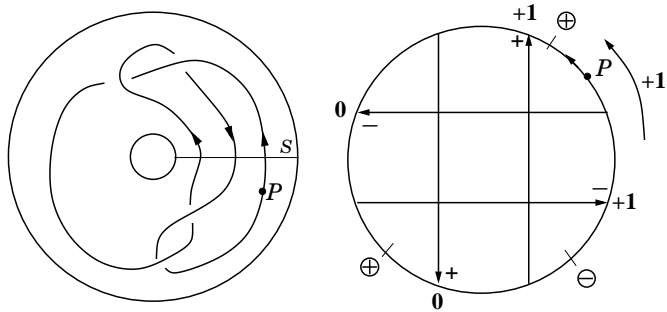,scale=1}
\caption{Associating a $T-$diagram to a knot and a section}\label{6}
\end{figure}

It is straightforward to see that if $D\subset \RR \times \SS$ is a virtual knot diagram with decorated Gauss diagram $G$, then every section $S= \RR\times \left\lbrace t\right\rbrace$ in general position with $D$ naturally defines a refinement of $G$, using intersection numbers between $S$ and the knot as markings (see Fig\ref{6}).

\begin{remarks}

$\triangleright$ As a $0$-cycle, the set of markings of a $T-$diagram is Poincar\'e-dual to the $1$-cocycle which takes a loop to its homology class in $\RR\times\SS$.

$\triangleright$ Any decorated Gauss diagram admits a refinement, since it may be represented by a knot (Lemma \ref{basic}).

$\triangleright$ In section \ref{sec:virtual} (Proposition \ref{rmov}), we will see a set of elementary moves with which we may pass from any refinement to any other.

\end{remarks}

The following proposition gives a hint on the usefulness of this notion for our purposes. Compare with Proposition \ref{basic}.

\begin{lemma}\label{Trep}
Every $T-$diagram is represented by a virtual tangle.
\end{lemma}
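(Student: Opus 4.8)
The plan is to induct on the number of arrows, following the scheme of Proposition \ref{basic} but keeping track of a fixed section $S=\RR\times\{t\}$ throughout. It is convenient to work in the tangle picture: a virtual tangle is a diagram in the strip $\RR\times[0,1]$ whose endpoints, read on the identified boundary $S$, are exactly the markings of the $T$-diagram, each endpoint being oriented upward or downward according to the sign of the corresponding marking. Cutting $\RR\times\SS$ along $S$ turns the ambient space into the simply connected strip $\RR\times(0,1)$, and the valuation of a loop is read off as its signed intersection number with $S$, i.e. the signed count of markings it meets. This reformulation is precisely what makes the statement sharper than Proposition \ref{basic}: the valuations are no longer free parameters to be adjusted by winding around $\SS$, but are pinned down in advance by the markings.

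For the base case, a $T$-diagram with no arrows is a circle carrying a cyclic word of signed markings (and a circle valuation equal to their sum). I would realize it by a totally virtual tangle: place the prescribed endpoints on $S$ in the given cyclic order and with the given orientations, then join them by arbitrary strands, declaring every crossing virtual. By the fundamental property of virtual crossings (Lemma \ref{fund}) the choice of strands is irrelevant, and the total homology class of the resulting knot is automatically the sum of all markings, as required.

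For the inductive step, take an $n$-arrow $T$-diagram $T$ and forget one arrow $A$. The remaining diagram $T'$ is still a valid $T$-diagram: the distinguished loops of the other arrows run along the same arcs of the circle and meet the same markings, so the compatibility condition is untouched. By induction $T'$ is represented by a virtual tangle $D'$ transverse to $S$, realizing all the markings. I then reintroduce $A$ exactly as in Proposition \ref{basic} --- pushing a small finger out of one endpoint of $A$ until it crosses the other, with the writhe dictating the sign of the new real crossing and all incidental crossings declared virtual --- but with the essential constraint that the finger is routed entirely inside the cut-open strip $\RR\times(0,1)$, so that it never meets $S$. This neither creates nor destroys any marking, and leaves the other real crossings and their distinguished loops unchanged.

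It remains to check that the valuation of $A$ comes out correctly, and this is where the argument departs from Proposition \ref{basic} and where I expect the only real subtlety. The distinguished loop of $A$ consists of the arc of $D'$ from the head of $A$ to its tail, closed up along the finger. Since the strip is simply connected, any two admissible routes for the finger differ by a null-homotopic loop, whose signed intersection number with $S$ vanishes; hence the intersection number of the distinguished loop with $S$ does not depend on the route and equals the signed count of markings lying on that arc. By the defining compatibility condition of a $T$-diagram, this count is exactly the prescribed valuation of $A$. Thus no winding around $\SS$ --- which would have spoiled the markings --- is needed, and the construction realizes $T$ as a virtual tangle, completing the induction.
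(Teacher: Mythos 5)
Your proof is correct and rests on exactly the same idea as the paper's: realize the markings as the local models of intersections with $S$, realize the crossings, and connect everything by virtual arcs routed entirely inside the cut-open strip so that no new intersection with $S$ is created, whence the compatibility condition of the $T$-diagram forces each valuation to come out right. The only difference is organizational --- the paper draws all the local pieces at once and connects them in a single step, while you build the diagram by induction on the arrows and verify the valuation at each stage --- which does not change the substance of the argument.
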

\begin{proof}
Draw a section $S=\RR\times\left\lbrace t\right\rbrace$ in $\RR\times\SS$. Then for each marking on our $T-$diagram, draw anywhere on $S$ the local behaviour that a representing tangle should have. Similarly, for each arrow draw a crossing anywhere in the picture, with the indicated writhe. Then, without creating any more intersection with $S$, connect all these little parts together in the required order, declaring virtual any additional crossing needed.
\end{proof}

Notice that the construction we just described contains only two choices: 

$\triangleright$ The ordering of the set of markings of $\G$ (we need it when we draw them onto $S$, which is $1-$dimensional). But any two different choices at this stage are related by virtual Reidemeister II moves, as shown on Fig.\ref{n+k}.

\begin{figure}[h!]
\centering 
\psfig{file=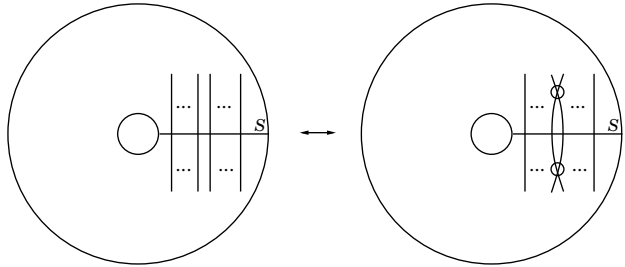,scale=0.9}
\caption{If we can make a transposition, then we can make any permutation}\label{n+k}
\end{figure}

$\triangleright$ The position of the virtual arcs in the last step of the construction. But the fundamental property of virtual crossings literally says that this last choice has no influence either, up to virtual Reidemeister moves.

So we actually proved much better:

\begin{proposition}\label{Tunique}
Two tangles with the same $T-$diagram are equivalent under virtual Reidemeister moves.
\begin{flushright}
$\square$
\end{flushright}
\end{proposition}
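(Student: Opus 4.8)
The plan is to upgrade the construction of Lemma~\ref{Trep} from a mere existence statement into a normal form: rather than producing \emph{some} tangle representing a $T$-diagram $\G$, I want to argue that \emph{every} tangle with $T$-diagram $\G$ is, up to virtual Reidemeister moves, one of the tangles output by that construction. Once this is in hand, the proposition follows at once, since the construction involves only the two choices already isolated in the discussion after Lemma~\ref{Trep}, and both were seen to be immaterial up to virtual moves.

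So let $T$ be any tangle whose $T$-diagram is $\G$, with its distinguished section $S$. First I would isotope $T$ into standard position: shrink each real crossing into a small disk in which it displays exactly the writhe recorded by the corresponding arrow, and slide each intersection point of $T$ with $S$ to a prescribed location on $S$, in some linear order. The crucial point is that $\G$ records \emph{all} the real crossings of $T$ (one per arrow) and \emph{all} the intersections of $T$ with $S$ (one per marking). Consequently, reading the circle of $\G$ in its cyclic order and cutting $T$ at the corresponding features, every complementary arc --- the piece of $T$ running between two consecutive features --- meets neither another real crossing nor the section $S$, and therefore carries only virtual crossings.

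This is precisely the description of a tangle assembled by the procedure of Lemma~\ref{Trep}: standardized real crossings, standardized markings on $S$, and purely virtual connecting arcs realizing the prescribed cyclic order. Hence $T$ is one of the tangles produced by that construction, for some auxiliary choice of (a)~the left-to-right order of the markings along $S$ and (b)~the routing of the virtual arcs. Choice~(b) is irrelevant by the fundamental property of virtual crossings (Lemma~\ref{fund}), since each connecting arc is determined up to virtual moves by its two endpoints; and any two instances of choice~(a) differ by transpositions of adjacent markings, each realized by a virtual Reidemeister~II move as in Fig.~\ref{n+k}. Applying the same reduction to a second tangle $T'$ with $T$-diagram $\G$ brings both $T$ and $T'$ to the same normal form, so $T$ and $T'$ are equivalent under virtual Reidemeister moves.

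The main obstacle I anticipate is the reduction to standard position in the second step, and in particular the verification that the connecting arcs are genuinely virtual. This rests on the bookkeeping that the arrows and markings of $\G$ exhaust the real crossings and the section-crossings of $T$ respectively; making that precise --- and checking that one can isolate the crossings and markings by an ambient isotopy without secretly creating new real crossings between consecutive features --- is the delicate part, whereas the appeal to Lemma~\ref{fund} and to the virtual~II moves of Fig.~\ref{n+k} for the two remaining choices is then routine.
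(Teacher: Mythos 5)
Your proposal is correct and follows essentially the same route as the paper: the paper proves this proposition precisely by observing that the construction of Lemma~\ref{Trep} involves only the two choices you isolate (the ordering of the markings on $S$, handled by virtual Reidemeister II moves as in Fig.~\ref{n+k}, and the routing of the virtual arcs, handled by the fundamental property of virtual crossings, Lemma~\ref{fund}). The normal-form reduction you flag as the delicate step is left implicit in the paper, but it amounts to the same observation you make --- that the arrows and markings exhaust the real crossings and the section-crossings, so each connecting arc is purely virtual and, since it avoids $S$, lies in a simply connected region where Lemma~\ref{fund} applies.
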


The \enquote{real} version of Lemma \ref{Trep} is false, in the sense that a $T-$diagram whose underlying decorated Gauss diagram is known to be represented by a real knot (\textit{i.e.} without virtual crossing) may not necessarily be represented by a real tangle. Though Lemma \ref{key} will show that this is not so far from being true.

\section{Decorated Gauss diagrams characterize virtual knots}
\label{sec:virtual}

In the classical case, it is easy to see that knot diagrams up to virtual Reidemeister moves are the same as Gauss diagrams - it is contained in \mbox{Proposition \ref{Tunique}.}\\ In $\RR\times\SS$, there is homology which we need to control: this is what $T-$diagrams do. Unfortunately, a given decorated Gauss diagram may have many refinements, so we first need to understand how they are linked with each other.

\begin{definition}
Adding or deleting a couple of markings of opposite signs right next to each other in a $T-$diagram is called an \textit{elementary refinement move of type I}.
If $A$ is an arrow, then adding a positive (\textsl{resp.} negative) marking just after each endpoint of $A$, and adding a negative (\textsl{resp.} positive) marking just before them, is called an \textit{elementary refinement move of type II+ (\emph{resp.} II-)} (see Fig.\ref{7}). Note that the reverse operation of a move of type II+ is just a move of type II- followed by four deletions of type I.
\end{definition}

\begin{figure}[h!]
\centering 
\psfig{file=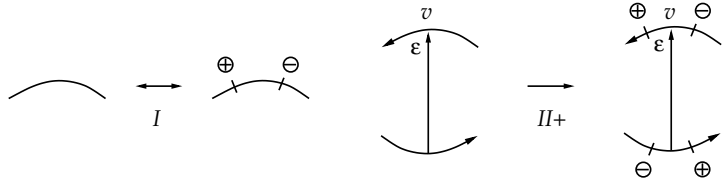,scale=1}
\caption{Elementary refinement moves of both types}\label{7}
\end{figure}

\begin{proposition}\label{rmov}
Any two refinements of a decorated Gauss diagram are linked by a finite number of elementary refinement moves of type I and II+.
\end{proposition}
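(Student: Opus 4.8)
The plan is to quotient out the type I moves, translate the statement into a lattice problem about \emph{arc-sum vectors}, and then exploit a single linear relation among the type II+ vectors to circumvent the fact that the reverse of a II+ move (equivalently, a II$-$ move) is not allowed. As a first step I would show that, up to type I moves, a refinement of $G$ is determined by its arc-sum vector: the $2n$ endpoints of the $n$ arrows cut the circle into $2n$ arcs, and I record in $f\in\ZZ^{2n}$ the algebraic sum of the markings lying in each arc. A type I move neither crosses an endpoint nor alters any arc-sum, and conversely, inside a single arc any two marking words of equal sum are type I equivalent: cancel adjacent opposite pairs to reduce each word to $\mathrm{sign}(s)^{|s|}$, and transpose adjacent markings by deleting a $+-$ couple and reinserting it as $-+$. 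Hence two refinements are type I equivalent if and only if they have the same $f$, and it suffices to connect arc-sum vectors by type II+ moves.

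Next I would encode the constraints as a kernel. Evaluating $f$ on the homology basis of the diagram given by the circle together with the $n$ distinguished loops (Lemma \ref{hom}) defines a linear map $\phi\colon\ZZ^{2n}\to\ZZ^{n+1}$, and $f$ is the arc-sum vector of a refinement of $G$ exactly when $\phi(f)$ equals the valuations prescribed by $G$. Thus two refinements $f,f'$ of the same $G$ satisfy $g:=f-f'\in K:=\ker\phi$. A type II+ move on an arrow $A$ adds to $f$ a fixed vector $v_A$, equal to $+1$ on the arc just after and $-1$ on the arc just before each of the two endpoints of $A$; since any distinguished loop contains the two arcs flanking a given endpoint together or not at all, the $\pm1$'s cancel and $\phi(v_A)=0$, i.e.\ $v_A\in K$. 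It then remains to realise every $g\in K$ as a \emph{non-negative} combination of the $v_A$.

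The core step identifies $K$. Writing an arc-function as a coboundary $\delta\psi$, where $\psi$ is defined on the endpoints and $\delta\psi$ reads $\psi(\mathrm{head})-\psi(\mathrm{tail})$ on each arc, the condition $\phi(g)=0$ means precisely that $g=\delta\psi$ with $\psi$ taking equal values at the two endpoints of every arrow (so that $\delta\psi$ vanishes on all chords). Such a $\psi$ is the same datum as a function on the \emph{set of arrows}, defined up to a global constant, so $K\cong\ZZ^{\{\mathrm{arrows}\}}/\ZZ\cdot\mathbf 1$; under this identification $v_A$ corresponds to $\pm e_A$. Consequently the $v_A$ generate $K$, and their only relation is $\sum_A v_A=0$.

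Finally I would remove the negative coefficients. Write $g=\sum_A c_A v_A$ with $c_A\in\ZZ$. Because $\sum_A v_A=0$, for every integer $m$ one still has $g=\sum_A (c_A+m)\,v_A$; choosing $m=-\min_A c_A$ makes all $d_A:=c_A+m$ non-negative. Applying the type II+ move at $A$ exactly $d_A$ times — always in the forward, marking-adding direction — converts $f'$ into a vector with arc-sums $f$, after which the reduction of the first paragraph matches the two refinements by type I moves. The main obstacle is exactly this last point: only type II+ is permitted, so one cannot subtract a $v_A$, and the whole argument hinges on the relation $\sum_A v_A=0$, which allows a uniform shift to absorb the negative coefficients. (When $n=0$ there are no arrows, $K=0$, and the claim reduces to the type I statement of the first paragraph.)
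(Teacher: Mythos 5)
Your proof is correct and sits inside the same framework as the paper's: quotient by type~I moves so that a refinement is recorded by its vector of edge-sums in $\ZZ^{2n}$, observe that the difference of two refinements of $G$ lies in the integer kernel of the valuation map $\ZZ^{2n}\to\ZZ^{n+1}$, show that the type~II+ vectors $v_A$ span that integer kernel, and absorb negative coefficients via the relation $\sum_A v_A=0$ (the paper states this as \enquote{the type~II$-$ move on an arrow is the sum of the type~II+ moves on all the others}). Where you genuinely diverge is in the central step, proving that the $v_A$ generate $\ker M\cap\ZZ^{2n}$. The paper does this combinatorially: it orders the arrows by an adjacency/erasing argument so that each $x_i$ ($i\le n-1$) has entry $\pm1$ on some edge where all earlier $x_k$ vanish, and this unimodular triangular structure yields both a basis of the rank-$(n-1)$ kernel and the integrality of coordinates. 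You instead identify the kernel intrinsically: an edge-function summing to zero over the circle and over every distinguished loop is exactly a coboundary $\delta\psi$ with $\psi$ constant on the two endpoints of each arrow, giving $\ker\phi\cong\ZZ^{\{\mathrm{arrows}\}}/\ZZ\cdot\mathbf 1$ with $v_A\mapsto \pm e_A$. This buys you, in one stroke, generation over $\ZZ$, the rank count, and the fact that $\sum_A v_A=0$ is the \emph{only} relation --- which is precisely what licenses the uniform shift $c_A\mapsto c_A+m$ --- whereas the paper obtains the relation as a separate observation at the end. One small imprecision: your stated reason that $\phi(v_A)=0$ (a distinguished loop contains the two arcs flanking an endpoint together or not at all) does not literally apply to the distinguished loop of $A$ itself, whose boundary points are the endpoints of $A$; there the cancellation is between the $+1$ just after the head and the $-1$ just before the tail. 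Since $v_A=-\delta(\mathbf 1_A)$ makes $v_A\in\ker\phi$ immediate anyway, nothing is at stake.
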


\begin{proof} Let $\G_1$ and $\G_2$ have the same underlying decorated Gauss diagram $G$. We call \textit{edges} of a Gauss diagram the connected components of the complementary of its arrows.
Up to elementary refinement moves of type I, we may assume that any edge of either $\G_1$ or $\G_2$ contains markings with all the same sign. Two such diagrams may then be assimilated to elements of the $\ZZ-$module freely generated by the edges of $G$, numbered from $e_1$ to $e_{2n}$, where $n$ denotes the number of arrows in $G$.

Notice that if $n\leq 1$, then $G$ only has one refinement up to type I moves. Now we assume $n\geq 2$.

Let $M\in \mathcal{M}_{n+1,2n}(\ZZ)$ be a matrix whose arrows are defined as the distinguished loops of $G$, written in the basis $\left\lbrace e_i\right\rbrace$. We look at $M$ as a linear map $\RR^{2n}\rightarrow\RR^{n+1}$.
With this description, the \enquote{difference} between $\G_1$ and $\G_2$ is nothing but an element of $\ZZ^{2n}\cap\text{Ker}M$ - a way of changing the integer assigned to each edge of $G$ without changing their sum on any distinguished loop. Since the distinguished loops of $G$ form a basis of $H_1(G)$, $M$ has maximal rank, \ie $\text{dim}(\text{Ker}M)=n-1$. We are now going to prove that the refinement moves associated to any $n-1$ arrows of $G$ form a suitable basis of $\text{Ker}M$.

Let $A_n$ be any arrow of $G$. At least one of the $3$ or $4$ edges of $G$ that are adjacent to $A_n$ is also adjacent to another arrow $A_{n-1}$. Erase $A_{n-1}$: in the new Gauss diagram (if $n\geq 3$) there must be again an arrow $A_{n-2}$ with an adjacent edge in common with $A_n$. This means that in the original diagram $G$, it had an adjacent edge which was also adjacent to either $A_n$ or $A_{n-1}$, but certainly not any other arrow. Iterating this process, we get the arrows of $G$ into some order $\left( A_i\right)_{i=1}^n$ satisfying the following: if $x_i\in \text{Ker}M$ corresponds to the type II+ move associated to $A_i$, then
\[
\forall i \in \left\lbrace 1,\ldots ,n-1\right\rbrace , \exists j\in \llbracket 1,2n\rrbracket \mid \left\lbrace \begin{array}{l}
<x_i,e_j>\in \left\lbrace -1,1 \right\rbrace \\
\forall k \in \left\lbrace 1,\ldots ,i-1\right\rbrace , <x_k,e_j>=0
\end{array}
\right. 
.
\]

This implies not only that $\left\lbrace x_i \right\rbrace$ forms a basis of Ker$M$, but also that any element of $\ZZ^{2n}\cap\text{Ker}M$ has integer coordinates in that basis. So we may go from $\G_1$ to $\G_2$ by means of elementary moves of type I, II+ and II-. Finally, notice that the elementary move of type II- associated to some arrow is the sum of the type II+ moves associated to all the others.
\end{proof}

\begin{lemma}\label{perf}
Let $(D,S)$ be a virtual tangle diagram in $\RR\times\SS$ and $\G$ its $T-$diagram. If $\G^\prime$ is obtained from $\G$ by one elementary refinement move, then there is a virtual knot diagram $D^\prime$ equivalent to $D$ under virtual Reidemeister moves such that $\G^\prime$ is the $T-$diagram of $\left( D^\prime,S\right)$.
\end{lemma}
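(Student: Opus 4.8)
The plan is to realize each elementary refinement move by a purely local modification of the diagram $D$ that keeps the section $S$ fixed, that is a virtual Reidemeister equivalence, and that induces on the $T$-diagram exactly the prescribed change of markings while leaving the underlying decorated Gauss diagram $G$ intact. It suffices to treat one addition of type I and one move of type II$+$: the corresponding deletions and the move of type II$-$ are obtained either by running the same modifications backwards or by sliding in the opposite direction (and in any case, as noted after the definition, a II$-$ move is expressible through II$+$ and type I moves). Throughout, the key mechanism is that any crossing we create between a moving piece of $D$ and the rest of the diagram may be declared virtual, so that the fundamental property of virtual crossings (Lemma \ref{fund}) lets us ignore exactly what the moving piece does in the meantime.

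For a move of type I, I would isotope a small arc of $D$ lying in an edge of $G$ so that it grows a little finger which crosses $S$ and immediately returns, declaring virtual every crossing swept out during the push. The new arc is homotopic to the old one and meets the rest of $D$ only in virtual crossings, so by Lemma \ref{fund} the result is equivalent to $D$. Since the finger crosses $S$ once in each direction, it contributes precisely one pair of adjacent markings of opposite sign (in either order, according to the direction of the push), it creates no new arrow, and it changes no other marking; hence its $T$-diagram is $\G$ with one type I pair inserted.

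For a move of type II$+$ associated to an arrow $A$, let $c$ be the real crossing of $D$ corresponding to $A$, and enclose it in a small disk $\Delta$ containing no part of $S$. After a preliminary ambient isotopy that pulls $\Delta$ away from $S$ and lengthens the four arcs (the \emph{leashes}) joining $\partial\Delta$ to the rest of $D$, I would transport $\Delta$ rigidly across $S$, carrying the real crossing $c$ untouched, and declare virtual every crossing the leashes acquire with the rest of $D$ and with one another. Each of the two strands through $c$ then meets $S$ once on either side of $c$; tracking orientations shows that a slide in the appropriate direction deposits a negative marking just before and a positive marking just after each endpoint of $A$, which is exactly the type II$+$ pattern, and that no other marking is affected. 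The crossing $c$ keeps its sign, its over/under information and the homology class of its distinguished loop, and all crossings created are virtual, so $G$ and every valuation are preserved; equivalence with $D$ follows by applying Lemma \ref{fund} to each leash.

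The step I expect to be the main obstacle is making the type II$+$ slide rigorous: unlike the type I finger, the sub-configuration being moved contains the genuine crossing $c$, so Lemma \ref{fund} cannot be applied to the whole disk-plus-leashes at once but must be applied leash by leash while $\Delta$ is transported by a genuine isotopy. One must check that this transport can be carried out so that the only real crossing throughout is $c$, that the four leashes can be made to cross $S$ exactly once each and everything else only virtually, and—most delicately—that the resulting four markings occupy the correct before/after slots with the correct signs, so that the output is $\G^\prime$ and not some other refinement. Verifying that no hidden marking is created or destroyed, and that the diagram stays in general position with $S$, is the routine but essential bookkeeping that completes the argument.
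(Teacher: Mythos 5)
Your proposal follows essentially the same route as the paper's proof: type I moves are realized by pushing a small, entirely virtual finger of the diagram across $S$ and back (justified by Lemma \ref{fund}), and type II$+$ moves by transporting the real crossing to the other side of $S$ so that each of its two strands picks up one intersection with $S$ on either side of the crossing, with all newly created crossings declared virtual. The step you yourself flag as the main obstacle is, however, the only point where something beyond Lemma \ref{fund} is needed, and you leave it unresolved: when the disk $\Delta$ containing the real crossing $c$ is carried across $S$, it must in general pass through other strands of $D$, and pushing a strand across the genuine crossing $c$ is neither an isotopy of the diagram nor covered by Lemma \ref{fund}, which applies only to arcs all of whose crossings are virtual. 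The move that legalizes this passage is the mixed virtual Reidemeister III move of Fig.\ref{3}, in which a real crossing slides past a strand meeting its two branches only virtually. The paper makes this clean by reversing your order of operations: it first performs two type I moves, creating two virtual tongues across $S$ whose internal branches are chosen so close together that they meet exactly the same other branches of $D$, all virtually; the real crossing is then slid between these two branches to the other side of $S$ by a sequence of mixed virtual Reidemeister III moves alone, which also settles automatically your bookkeeping worry about which of the four markings lands in which before/after slot. With that move named and the slide organized in this way your argument closes up; as written, the rigid transport of $\Delta$ is asserted but not reduced to allowed moves.
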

\begin{proof}
Fig.\ref{n} shows how to perform type I moves. Again a dashed arc represents a part of the knot with only virtual intersections. Both parts of the proof are fully based on the fundamental property of virtual \mbox{crossings (Lemma \ref{fund}).}

\begin{figure}[h!]
\centering 
\psfig{file=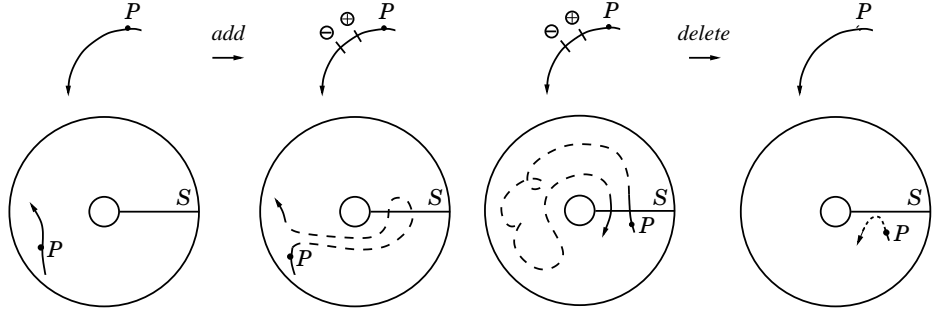,scale=1}
\caption{Performing elementary moves of Type I}\label{n}
\end{figure}

To perform a type II move, we begin with two moves of type I (Fig.\ref{n+1}, step 1). The two internal branches are chosen so close that they both (virtually) intersect the same other branches of the knot, which allows us to slide the real crossing, using virtual Reidemeister III move from Fig.\ref{3}, all the way to the other side of $S$.

\begin{figure}[h!]
\centering 
\psfig{file=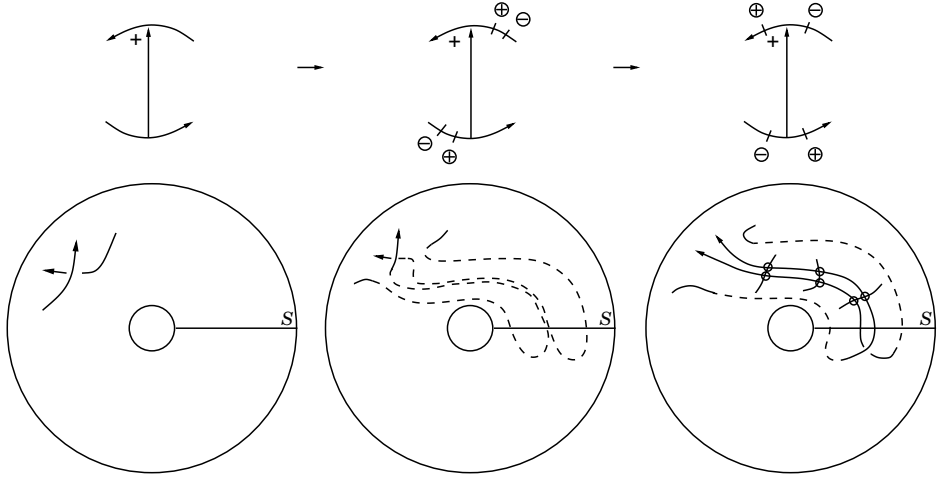,scale=1}
\caption{Performing an elementary move of Type II}\label{n+1}
\end{figure}
\end{proof}

We are now in a position to prove the announced theorem.

\begin{theorem}\label{virtual}
Two virtual knot diagrams in $\RR\times\SS$ with the same decorated Gauss diagram are equivalent under virtual Reidemeister moves.
\end{theorem}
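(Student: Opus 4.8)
The plan is to assemble the three preceding results---Propositions \ref{rmov} and \ref{Tunique} together with Lemma \ref{perf}---into a single chain of equivalences. Let $D_1$ and $D_2$ be two virtual knot diagrams in $\RR\times\SS$ sharing the decorated Gauss diagram $G$, and let $\sim$ denote equivalence under virtual Reidemeister moves. The whole point is that none of the geometric work involving virtual crossings needs to be redone here: it has already been packaged into Lemma \ref{perf} and Proposition \ref{Tunique}, so what remains is bookkeeping.

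First I would fix a single section $S=\RR\times\left\lbrace t\right\rbrace$ transverse to both diagrams. For $D_1$ this holds for generic $t$; then a small isotopy of $D_2$---one that creates no new crossing, hence leaves its Gauss diagram unchanged and is realized by virtual Reidemeister moves---puts $D_2$ in general position with respect to the same $S$. Each pair $(D_i,S)$ then determines a $T$-diagram $\G_i$, and by construction both $\G_1$ and $\G_2$ are refinements of the common diagram $G$.

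Next I would invoke Proposition \ref{rmov} to obtain a finite sequence of elementary refinement moves of type I and II+ carrying $\G_1$ to $\G_2$ (where a step run in reverse is, by the remark following the definition of refinement moves, again a composition of elementary moves, to each of which Lemma \ref{perf} applies). Feeding this sequence into Lemma \ref{perf} move by move, and starting from $(D_1,S)$, each elementary move on the $T$-diagram is realized by replacing the current diagram with a virtually equivalent one having the \emph{same} section $S$ and the next $T$-diagram. After finitely many steps this yields a diagram $D_1'\sim D_1$ whose $T$-diagram with respect to $S$ is exactly $\G_2$. Finally, $D_1'$ and $D_2$ share the $T$-diagram $\G_2$, so Proposition \ref{Tunique} gives $D_1'\sim D_2$; composing, $D_1\sim D_1'\sim D_2$.

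The only genuinely delicate point is the first one: ensuring that both diagrams can be made transverse to one and the same section while keeping their Gauss diagram intact. The subtlety is that Proposition \ref{rmov} and Lemma \ref{perf} presuppose a \emph{fixed} common $S$, so the transversality adjustment of $D_2$ must be achieved by moves that do not alter its Gauss diagram---hence by crossing-preserving isotopy only. Once this is secured, the rest is a mechanical concatenation of the established statements, with the fixed section $S$ threaded unchanged through every application of Lemma \ref{perf}.
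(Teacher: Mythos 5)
Your proof is correct and follows essentially the same route as the paper: Proposition \ref{rmov} plus Lemma \ref{perf} bring both diagrams to a common refinement up to virtual Reidemeister moves, and Proposition \ref{Tunique} finishes. The one point you flag as delicate---arranging a single common section $S$ for both diagrams---is actually unnecessary, since Proposition \ref{rmov} is purely combinatorial and Proposition \ref{Tunique} only requires the two tangles to share the same abstract $T$-diagram, not the same literal section in $\RR\times\SS$.
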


\begin{proof}
Proposition \ref{rmov} and Lemma \ref{perf} together imply that the common decorated Gauss diagram $G$ has a refinement $\G$ such that up to virtual Reidemeister moves, both knot diagrams write as the closure of a tangle with $T-$diagram $\G$. Now Proposition \ref{Tunique} concludes the proof.
\end{proof}

\section{A stronger version in the case of real knots}
\label{sec:real}

In this section we show that there is a version of Theorem \ref{virtual} without Reidemeister moves, as soon as the knot diagrams are real and cannot be isotoped into a little disc. This case we need to avoid actually amounts to the classical theory, where such a theorem holds only up to \textit{real} Reidemeister moves. So decorated Gauss diagrams of knots in the solid torus contain more information about the knots they represent than classical Gauss diagrams do (Corollary \ref{wind}).

Let us describe a little bit those diagrams we are looking at:
\begin{definition}
We say that a real knot diagram is \textit{full} if it may not be isotoped into a little disc.
\end{definition}

\begin{figure}[h!]
\centering 
\psfig{file=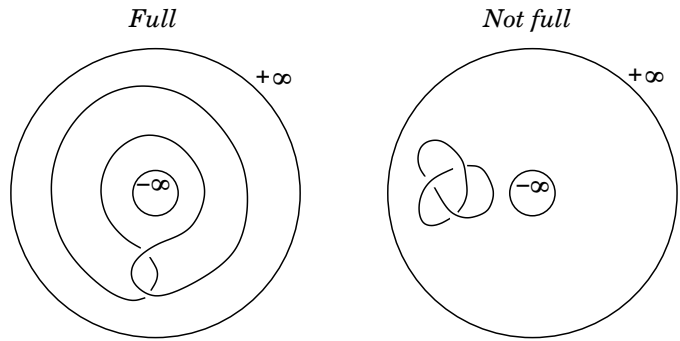,scale=1}
\caption{Fullness is not defined up to Reidemeister moves}\label{n+k+1}
\end{figure}

Let $D$ be a full diagram, and consider the complementary of $D$ in $\RR\times \SS$. It has two connected components with the homotopy type of a circle, each of which is bounded by a simple loop in $D$ with homology class $1$, and every other component is homeomorphic to an open disc - bounded by a simple loop in $D$ with homology class $0$.
\begin{definition}
In a full knot diagram, the two loops with homology class $1$ we just described are called the \textit{leftmost} and \textit{rightmost} loops, according to whether they bound the $-\infty$ or $+\infty$ end of $\RR\times\SS \setminus D$. The boundaries of the disc components are called the \textit{internal loops} of $D$.
\end{definition}

\begin{lemma}\label{full}
A real knot diagram is full if and only if its decorated Gauss diagram has at least one valuation different from $0$.
\end{lemma}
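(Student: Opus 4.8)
The plan is to read fullness off from the single homological quantity ``does $D$ carry a cycle of nonzero winding?'', and to identify that quantity with the valuations by means of Lemma \ref{hom}. Throughout I write $\Sigma=\RR\times\SS$ and recall that $H_1(\Sigma)\cong\ZZ$, generated by the core circle. I would treat the two implications separately.

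For the easy implication (some valuation $\neq 0\ \Rightarrow\ $full) I would argue the contrapositive. If $D$ is not full it may be isotoped into a little disc $\Delta\subset\Sigma$, and since an isotopy of the ambient cylinder preserves the combinatorics of the diagram, it preserves the decorated Gauss diagram together with all its valuations (each valuation is the homology class of a loop, hence an isotopy invariant). But $\Delta$ is contractible, so the whole knot and every distinguished loop, being loops lying inside $\Delta$, are nullhomologous in $\Sigma$; thus every valuation vanishes, which is exactly the contrapositive.

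For the substantial implication (full $\ \Rightarrow\ $some valuation $\neq 0$) I would invoke the structural description of full diagrams given just before the statement: a full $D$ possesses a leftmost loop, a simple loop carried by $D$ whose homology class in $\Sigma$ equals $1$. The key observation is that this class lies in the image of the map $H_1(D)\to H_1(\Sigma)$ induced by inclusion, because the leftmost loop is a genuine $1$-cycle supported on the graph $D$. Now Lemma \ref{hom}, together with the fact that $D$, like its Gauss diagram, has the homotopy type of a wedge of $n+1$ circles, tells us that $H_1(D)$ is generated by the fundamental class of the knot and by the distinguished loops of the $n$ crossings; under the map to $H_1(\Sigma)$ these generators go precisely to the valuation of the circle and to the valuations of the arrows. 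Hence the image of $H_1(D)\to H_1(\Sigma)$ is the subgroup of $\ZZ$ generated by all the valuations. Since this image contains the nonzero class $1$ of the leftmost loop, it is nontrivial, so at least one of its generators, that is, at least one valuation, must be nonzero.

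The two implications combine into the stated equivalence: the first gives ``some valuation $\neq 0\Rightarrow$ full'' by contraposition, and the second gives ``full $\Rightarrow$ some valuation $\neq 0$'' directly. I expect the only delicate point in my route to be the input underpinning the second implication, namely that the leftmost loop is honestly a cycle carried by $D$ (so that its nonzero class genuinely lands in the subgroup generated by the valuations); the heavier geometric fact, that a full diagram has this leftmost/rightmost structure at all, I am taking from the paragraph preceding the lemma. If one preferred a self-contained argument avoiding that structural paragraph, the natural substitute is to pass to the universal cover $\RR\times\RR\to\Sigma$: vanishing of the circle valuation lifts $D$ to a closed curve $\tilde D$, vanishing of the arrow valuations forces $\tilde D$ to be disjoint from all its nontrivial vertical translates, and one then checks that the components of the preimage are compact and therefore cannot separate the two ends of the plane, so that $D$ cannot separate the two ends of $\Sigma$ and may be compressed into a disc.
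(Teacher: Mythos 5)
Your proof is correct and follows essentially the same route as the paper's: the non-full direction by observing that everything in a disc is nullhomologous, and the full direction by noting that the leftmost loop is a cycle carried by the diagram whose class $1$ must lie in the subgroup of $H_1(\RR\times\SS)$ generated by the valuations. The paper phrases the key step as the leftmost loop \enquote{corresponding to a loop in $G$} because the crossings are real, which is exactly the identification $H_1(D)\cong H_1(G)$ you use implicitly when asserting that $D$ has the homotopy type of a wedge of $n+1$ circles.
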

\begin{proof}
Recall that the valuations of a decorated Gauss diagram are the images of a basis of $H_1(G)$ into $H_1(\RR \times \SS)$. If $D$ is not full then clearly every loop in its Gauss diagram has homology class $0$ in $\RR \times \SS$.
Conversely, if $D$ is full, then its leftmost loop has homology class $1$. Since by assumption $D$ is real, this loop actually corresponds to a loop in $G$, because the two branches of any crossing are actually connected by an arrow. Since some loop has a nontrivial image, any basis must contain an element with a nontrivial image.
\end{proof}

The key ingredient in Theorem \ref{real} is the following (recall that an isotopy of a knot diagram does not involve Reidemeister moves):

\begin{lemma}\label{key}
Let $D$ be a real knot diagram, and $\G$ any refinement of its decorated Gauss diagram. Then there is a refinement $\mathcal{G}^\prime$ obtained from $\mathcal{G}$ by removing some markings, a knot diagram $D^\prime$ isotopic to $D$ and a section $S=\RR\times \left\lbrace t\right\rbrace$, such that $\mathcal{G}^\prime$ is the $T-$diagram associated to $\left( D^\prime, S\right)$.
\end{lemma}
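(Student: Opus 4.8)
The plan is to read the markings off as intersections of the knot with a regular level of the height function $h\colon\RR\times\SS\to\SS$ (the projection onto the circle factor), and to land on a sub-refinement of $\G$ by isotoping $D$ so that one such level meets it in the prescribed points. If $D$ is not full then all valuations vanish by Lemma \ref{full}, the empty marking set is a refinement below $\G$, and it is realized by isotoping $D$ into a little disc and choosing $S=\RR\times\{t_0\}$ at a height outside the small range of that disc; so the content is entirely in the full case. There I would first isotope $D$ so that $h|_D$ is Morse, with all of its critical values and all the heights of the (real) crossings pairwise distinct, and pick $S=\RR\times\{t_0\}$ at a regular value $t_0$ missing every crossing height. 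This already realizes the natural $T$-diagram $\G_0$ of $(D,S)$ with $D'=D$; the whole task is to move $\G_0$ onto a sub-refinement of $\G$ by an isotopy of $D$ alone.

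Next I would record how the natural $T$-diagram changes as $D$ is isotoped across $S$, through two local moves. When a minimum or maximum of $h|_D$ is pushed through the level $t_0$, a pair of oppositely signed markings is created or destroyed on a single edge: this is exactly an elementary refinement move of type I. When a crossing is pushed through $t_0$, the two markings flanking it move from just before to just after its endpoints, with signs dictated by the local orientations of the two branches: this is an elementary move of type II$\pm$. Both are honest isotopies of $D$ — no Reidemeister move is needed — provided the small disc swept by the retracted cap, respectively by the crossing, meets no other branch of $D$; since $D$ is a fixed embedded graph, such room can be arranged beforehand, and by taking $t_0$ close to the feature being moved the swept disc is as small as we like.

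I would then compare $\G_0$ and $\G$ through Proposition \ref{rmov}, which joins them by type I and type II$+$ moves. I do not try to reproduce that sequence verbatim: this is precisely where Lemma \ref{perf} had to spend virtual crossings, which are unavailable here. Instead I use that the local moves above are realizable by isotopy in order to \emph{delete} markings — retracting every tongue I can (type I deletions) and sliding each crossing slightly off $S$ — stopping at the reduced diagram whose only remaining markings are those a real crossing is forced to contribute. Because $D$ is full, Lemma \ref{full} guarantees this reduced configuration is nonempty and is pinned down by the leftmost loop, of homology $1$. Taking $\G'$ to be this reduced diagram and $D'$ its positioned copy would finish the argument, once one checks that $\G'$ is genuinely a refinement lying below $\G$.

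The step I expect to be the main obstacle is controlling collisions in the second step: realizing a refinement move by isotopy requires the swept disc to avoid every other branch of $D$, and while this can always be arranged for \emph{deletions}, it may fail for the additions that would be needed to hit $\G$ on the nose, since forcing one branch across another is a Reidemeister move. This is exactly why the statement only asks for a sub-refinement: we retain only those markings whose realization never demands such a collision. Turning this into a clean proof means making precise which markings of $\G$ are \emph{forced} by the real crossings of $D$, and verifying that discarding the rest preserves the valuation condition on every distinguished loop. This bookkeeping, resting on the description of the complementary regions of a full diagram (the leftmost, rightmost and internal loops attached to the homology class $1$ of Lemma \ref{full}), is the technical heart of the lemma.
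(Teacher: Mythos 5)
Your proposal has a genuine gap, and it is exactly the one you flag at the end as ``the technical heart'': nothing in your construction ties the final marked diagram to the \emph{given} refinement $\G$. The lemma demands that $\mathcal{G}'$ be obtained from the prescribed $\G$ by deleting some of \emph{its} markings --- in particular the surviving markings must sit on the same edges, with the same signs, as markings of $\G$. But your $\G'$ is produced by reducing $\G_0$, the natural $T$-diagram of $(D,S)$ for a generic initial section, and $\G_0$ is unrelated to $\G$: the two are joined only through Proposition \ref{rmov}, whose type II$+$ \emph{additions} you correctly observe cannot be realized by isotopy alone (that is what Lemma \ref{perf} spends virtual Reidemeister moves on). Since you explicitly decline to follow that sequence, the markings of your reduced diagram live wherever a generic section happened to cross $D$, not where $\G$ put them; for a refinement $\G$ whose markings are concentrated on edges far from any level set of your initial Morse position, no amount of deletion from $\G_0$ lands inside $\G$. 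So what you actually prove is only that \emph{some} refinement of $G$ is realized by an isotoped copy of $D$ --- which is immediate from taking any generic section --- not the statement needed for Theorem \ref{real}, where $\G$ is chosen minimal and the conclusion must be that this particular $\G$ is realized.

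The paper's proof runs in the opposite direction and avoids this problem entirely: it draws the markings of the \emph{given} $\G$ on $D$ as small transverse arrows prescribing where and in which direction a section is allowed to cross $D$, and then constructs an embedded path from the $-\infty$ end to the $+\infty$ end passing only through a subset of these prescribed points. The path exists by a homological count on the complementary regions of $D$: the leftmost loop has class $1$, so there is an outgoing arrow to start with, and every internal region has boundary of class $0$, so each entry into a region is matched by an exit; self-intersections of the path are removed by shortcutting. This is also where being real is used in an essential way --- the boundary of each complementary region is an honest loop in the Gauss diagram (both branches of every crossing are joined by an arrow), so the sum of $\G$'s markings along it is controlled by Lemma \ref{hom}; for a virtual diagram this control is lost. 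Your proposal never exploits realness at this level, which is a further sign that the argument is not yet reaching the actual content of the lemma.
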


\begin{proof}
If $D$ is isotopic to a knot diagram $D^\prime$ contained in a little disc, then put $\mathcal{G}^\prime=\mathcal{G}$ with all the markings removed, and choose a section $S$ that avoids $D^\prime$.

\begin{figure}[h!]
\centering 
\psfig{file=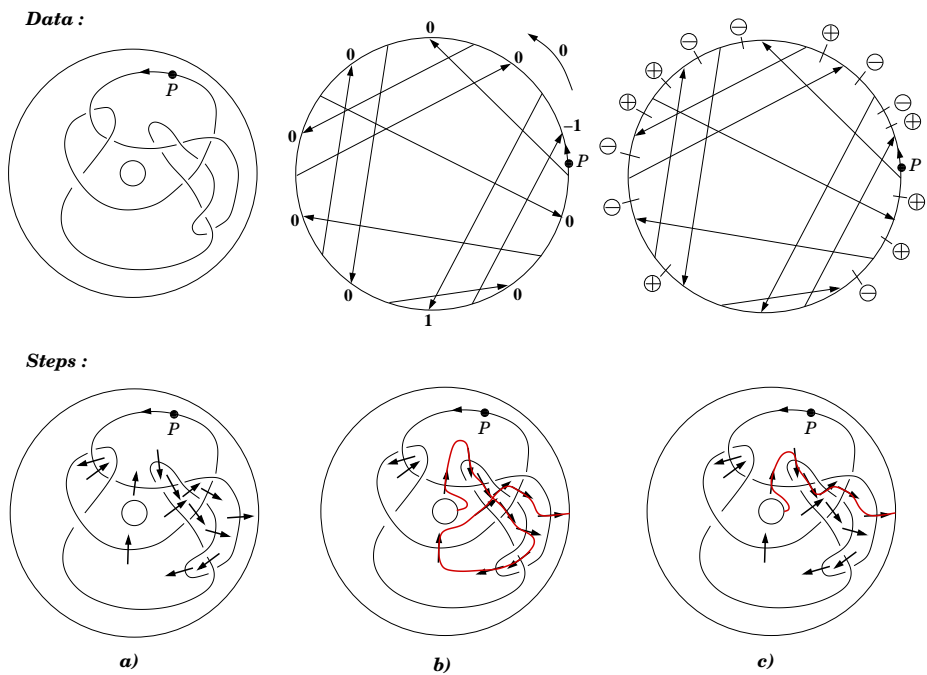,scale=1}
\caption{Steps of the proof of Lemma \ref{key}}\label{10}
\end{figure}

Now assume that $D$ is full. Draw the markings of $\G$ on $D$. Then replace each of them by a little arrow transverse to the knot diagram, that indicates the way a section should locally behave so as to give the marking (step \textit{a)} on Fig.\ref{10}). What we need to prove is that there is a path joining the two ends of $\RR\times\SS$, without self-intersection (so that some ambient isotopy can make it into a section $\RR\times\left\lbrace t\right\rbrace$), and meeting $D$ only at the places and with directions indicated by the arrows ($\mathcal{G}^\prime$ being obtained by removing all the markings left away from that path).

We start to draw such a path $\gamma$ from the end corresponding to $-\infty$. Recall that the sum of the markings met by a loop in $\G$ is equal to its homology class in $\RR\times\SS$. Since the leftmost loop of $D$ has homology class $1$, there is at least one little arrow indicating a \enquote{way to leave} the component, and as soon as we have left it, there will be as many ways to come back as to leave again. 
\begin{note}
Here is the crucial point for $D$ to be real: if it were not, then the boundary of its complementary components would be loops in $D$, but not in $\G$ (for the two preimages of a virtual crossing are not connected by an arrow), so that we would have no control on the markings they contain.
\end{note} 
The internal loops have homology class $0$, so that each time $\gamma$ enters a component which is not the $+\infty$ end, the fact that it could come implies that there is necessarily a way to leave. So we are sure of eventually reaching $+\infty$ (step \textit{b)} on Fig.\ref{10}). Finally, if $\gamma$ ever crosses itself, just forget what happened between the two times it were at this point (step \textit{c)} on Fig.\ref{10}).
\end{proof}

We are now ready for the main theorem of this section:

\begin{theorem}\label{real}
Two full knot diagrams with the same decorated Gauss diagram are isotopic to each other.
\end{theorem}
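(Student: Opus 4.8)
The plan is to cut both diagrams open along well-chosen sections, turning the problem into a comparison of two real tangles carrying the same $T$-diagram, and then to upgrade the conclusion of Proposition \ref{Tunique} from virtual Reidemeister equivalence to honest isotopy in the real setting.

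First I would fix a refinement $\G$ of the common decorated Gauss diagram $G$ and feed it to Lemma \ref{key}, once for $D_1$ and once for $D_2$. This produces diagrams $D_1^\prime, D_2^\prime$ isotopic to $D_1, D_2$ respectively, together with sections $S_1, S_2$ and refinements $\G_1^\prime, \G_2^\prime$ (each obtained from $\G$ by deleting markings) such that $\G_i^\prime$ is the $T$-diagram of $(D_i^\prime, S_i)$. The point of routing through Lemma \ref{key} rather than Theorem \ref{virtual} directly is that it spends only isotopies, never Reidemeister or virtual moves. The immediate difficulty is that $\G_1^\prime$ and $\G_2^\prime$ need not coincide, since the two applications may delete different markings.

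To force them to agree I would start from the minimal non-negative refinement of $G$. By the Poincar\'e-duality remark, the marking set of any refinement is dual to the cocycle sending a loop to its homology class in $\RR\times\SS$, so its class is fixed by $G$ alone; cancelling a pair of markings (a type I move) or running the reduction of Lemma \ref{key} can only lower their number, while fullness, via Lemma \ref{full}, guarantees that the leftmost loop forces at least one marking to survive. I expect the minimal non-negative refinement reachable this way to be canonically attached to $G$, so that both reductions land on one and the same $T$-diagram $\G_{\min}$. Establishing this canonicity --- that the homology constraint together with non-negativity pins the surviving markings down uniquely, up to the relabelling permitted by Proposition \ref{rmov} --- is the first genuinely technical step.

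With both diagrams cut into real tangles sharing the $T$-diagram $\G_{\min}$, it remains to reconstruct the tangle from $\G_{\min}$ up to isotopy alone. Reading along the knot from the section, the arrows of $\G_{\min}$ prescribe the crossings (with their over/under strand and sign) and the markings prescribe the intersections with the section, in a fixed cyclic order; because the diagram is real, every complementary region is bounded by a genuine loop of $G$, so the embedding of the underlying tetravalent graph in the annulus is rigid and this reading determines the diagram up to isotopy. This is the real analogue of Proposition \ref{Tunique}, but with isotopy in place of virtual Reidemeister moves, and it is where the main obstacle lies: the classical phenomenon that a Gauss code determines its diagram only on $\mathbb{S}^2$ and not on $\RR^2$ (the choice of outer region) reappears here as the choice of which two complementary regions are the $-\infty$ and $+\infty$ ends of $\RR\times\SS$. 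The crux is that the non-zero valuations of a full diagram --- the very hypothesis singled out in Lemma \ref{full} --- distinguish the leftmost and rightmost loops and thereby kill this ambiguity, making the annular realization unique. Once that is in hand, $D_1^\prime$ and $D_2^\prime$ are isotopic, hence so are $D_1$ and $D_2$.
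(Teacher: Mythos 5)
Your overall strategy is the same as the paper's: pass to a refinement with the minimal number of markings, use Lemma \ref{key} to cut both diagrams into real tangles with that $T$-diagram, and then argue that a real tangle realizing it is unique up to isotopy. One small simplification you are missing: you do not need any ``canonicity'' of the minimal refinement (minimal refinements need not be unique, and proving they are would be a detour). Since you feed the \emph{same} minimal $\G$ to Lemma \ref{key} for both diagrams, and the lemma outputs a refinement of $G$ obtained from $\G$ by \emph{removing} markings, minimality of the marking count forces nothing to be removed, so both applications return $\G$ itself. That disposes of your ``first genuinely technical step'' in one line.

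The genuine gap is in your last paragraph. The assertion that ``the embedding of the underlying tetravalent graph in the annulus is rigid'' because every complementary region is bounded by a loop of $G$ is precisely the statement that needs proof, and it is not a consequence of distinguishing the leftmost and rightmost loops alone. The paper spends most of its proof here, in three steps you do not supply. First, each connected component of the tangle is reconstructed strand by strand from $\G$: when the second endpoint of an arrow is met one must join an already-drawn piece of diagram, and the reason there is at most one way to do so is that the free ends of every strand must remain in the unbounded region so that they can later be glued to the boundary of $\RR\times\left[0,1\right]$. Second --- and this is where minimality of $\G$ is used a \emph{second} time --- one must show that every component has univalent vertices of both types (top and bottom); otherwise a component could sit entirely on one side of the section, and sliding the section past it would produce a refinement of $G$ with strictly fewer markings, contradicting minimality. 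Without this step the components could a priori be nested or floated past one another, and the diagram would not be determined. Third, the left-to-right order of the components must be recovered from $\G$: the leftmost one is characterized by its left boundary path joining two mates, and the subsequent ones are determined inductively by matching mates of the exposed bottom (then top) ends. Your observation that nonzero valuations distinguish the leftmost and rightmost loops feeds into this ordering argument but does not replace it. As written, your proposal assumes the conclusion of the uniqueness step rather than proving it.
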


\begin{proof}
Let $\G$ be a refinement of the common decorated Gauss diagram $G$, with \textit{minimal} number of markings - notice that this number must be positive by Lemma \ref{full}. Then by Lemma \ref{key} both knot diagrams write as the closure of a tangle with $T-$diagram $\G$.
So all we need to prove is that there is only one way to draw a real tangle diagram representing $\G$ in the previous conditions.

First let us show that the connected components of such a tangle diagram are uniquely determined, so that the only possible choice left consists in their relative position in $\RR\times\left[ 0,1\right]$.

We pick a marking in $\G$: it must correspond to the beginning of a strand. From there we follow the orientation of $\G$, collecting every piece of information we meet and using them to draw, step by step, a neighborhood of the strand (see Fig.\ref{11}).

\begin{figure}[h!]
\centering 
\psfig{file=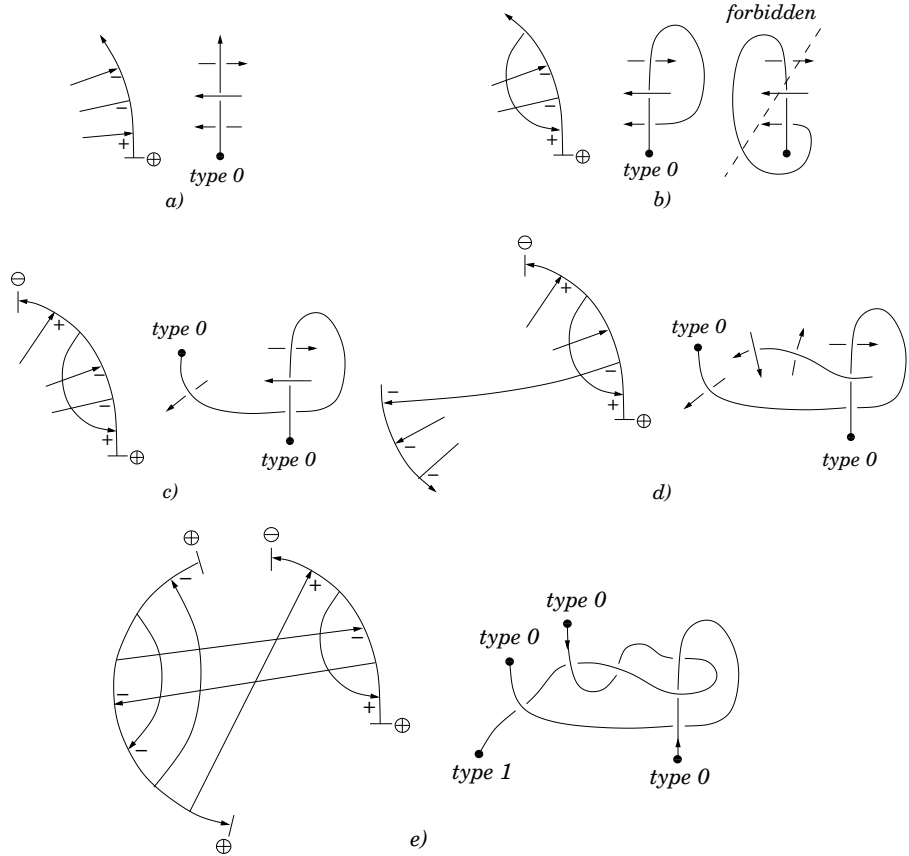,scale=1}
\caption{The connected components of a real tangle are uniquely determined}\label{11}
\end{figure}

$\blacktriangleright$ When we encounter the endpoint of an arrow, its direction and sign completely determine the local picture (step \textit{a}).

$\triangleright$ If we meet the second endpoint of some arrow, it means we have to join a piece of diagram already in the picture, and there is at most one way to do it: indeed, the ends of every strand must stay in the unbounded region of the plane, so that at the end we can glue them to the boundary of $\RR\times\left[ 0,1\right]$  (step \textit{b}). This point makes the crucial difference between Gauss diagram theories in $\RR^2$ and $\RR\times\SS$.

$\triangleright$ When we finally meet a marking again, it is the end of the strand and we stop here (step \textit{c}).

$\blacktriangleright$ If there are any, we pick an arrow which we have met at exactly one endpoint, and start over from the other one to extend the corresponding local incomplete picture (step \textit{d}).

When this is all over, we have drawn what must be a connected component of any tangle representing $\G$, without making any choice (step \textit{e}).

Each univalent vertex of these components has a \textit{type}, $0$ or $1$, according to whether it is meant to be glued to either of the two sides of $\RR\times\left[ 0,1\right]$. This side information is contained in $\G$, through the orientation and the sign of the markings. Let us call \textit{mates} two univalent vertices that are meant to be identified when we finally close the tangle.

We claim that any of the components must contain both types of vertices. Otherwise, consider a tangle whose $T-$diagram is $\G$ (we know there is at least one), then $S^\prime$ on Fig.\ref{n+k+2} shows how to obtain a new refinement of $G$ with less markings than $\G$ - which is a contradiction.

So our components look like bowels, as pictured in Fig.\ref{n+k+2}, and it remains to show that we can read on $\G$ in what order they shall be.

\begin{figure}[h!]
\centering 
\psfig{file=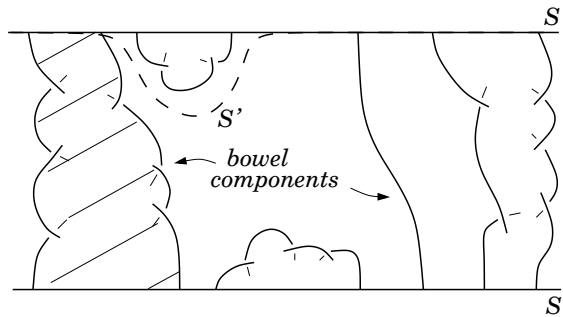,scale=1}
\caption{The connected components of a tangle}\label{n+k+2}
\end{figure}

First, the leftmost of them is uniquely determined by the property that its left \enquote{boundary path} joins two mates (it is meant to become the leftmost loop of the knot). Indeed, if there were two or more components with this property, the tangle would close into a disconnected diagram, and certainly not into a knot.

Now assume that we have been able to determine which are the $k$ leftmost components in a unique way. Look at the bottom ends of the picture they form.

$\triangleright$ Take the leftmost of them whose mate is not already in the picture: the component containing its mate is necessarily the next we should draw.

$\triangleright$ If all bottom ends already have their mate, look at the upper ends and repeat the same procedure.

$\triangleright$ If all upper ends also have their mate, then it means that the two rightmost ends in the picture are mates, and belong to one and the same component: this characterizes the rightmost loop of the knot, so the picture is actually complete.

\end{proof}

As a corollary, we see that \textit{any} function defined on full knot diagrams may theoretically be computed on decorated Gauss diagrams. In particular:

\begin{corollary}\label{wind} The Whitney index is a well-defined \emph{integer-valued} function on the set of decorated Gauss diagrams of real knots with at least one non zero valuation.
\end{corollary}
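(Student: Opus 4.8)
The plan is to realize the Whitney index as an honest isotopy invariant of the immersed curve, and then to transport it to decorated Gauss diagrams through Theorem \ref{real}. First I would make precise what the Whitney index means here. A real knot diagram is a closed curve immersed in the cylinder $\RR\times\SS$, whose tangent bundle is trivial, so I would fix once and for all the product framing coming from the two factors. With respect to this framing, the normalized tangent map of the diagram is a map $\SS\to\SS$, and its degree is a well-defined integer; this is exactly the content of the word \emph{integer-valued} in the statement. Note that here it is essential that the knot be \emph{real}: a virtual diagram is not immersed anywhere in the cylinder, so the tangent map, hence the Whitney index, would have no meaning.

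Next I would check that this integer is invariant under the isotopies of Theorem \ref{real}, that is, ambient isotopies of the diagram that involve no Reidemeister move. If $\phi_s$ is such an ambient isotopy of $\RR\times\SS$ and $\gamma$ is the diagram, then $\phi_s\circ\gamma$ is a homotopy through immersions; the degree of its tangent map relative to the fixed product framing varies continuously with $s$ while remaining an integer, and is therefore constant. Equivalently, the Whitney index is a regular-homotopy invariant and isotopy is a regular homotopy. Thus the Whitney index is constant on each isotopy class of full real knot diagrams.

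It remains to descend it to Gauss diagrams. By Lemma \ref{full}, a real knot diagram is full precisely when its decorated Gauss diagram carries at least one nonzero valuation, so the domain in the statement is exactly the set of decorated Gauss diagrams of full real knot diagrams. The Whitney index is a priori a function of the diagram; to view it as a function of the Gauss diagram I must verify that it takes the same value on every diagram sharing a given decorated Gauss diagram. This is immediate from Theorem \ref{real}: any two full diagrams with the same decorated Gauss diagram are isotopic, hence have equal Whitney index. Therefore the index factors through the assignment that sends a diagram to its decorated Gauss diagram, and defines a well-defined integer-valued function on the indicated set.

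I expect the only genuinely delicate point to be the first paragraph: pinning down the canonical product framing so that the index is a bona fide integer rather than a quantity depending on a choice of framing, and recording that realness is what makes the curve immersed in the first place. Once integrality and isotopy-invariance are in hand, the corollary is a formal consequence of Theorem \ref{real}. This is precisely the sense in which decorated Gauss diagrams retain strictly more information than classical ones, which fail to determine the Whitney index because the classical analogue of Theorem \ref{real} holds only up to Reidemeister moves.
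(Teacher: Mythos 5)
Your proposal is correct and follows exactly the paper's route: the paper derives the corollary in one line from Theorem \ref{real} (full diagrams with equal decorated Gauss diagrams are isotopic, so any isotopy invariant descends) together with Lemma \ref{full} identifying fullness with having a nonzero valuation. You merely spell out the details the paper leaves implicit, namely the product framing making the index an integer and the regular-homotopy invariance under isotopy.
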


Recall that in the classical case, the Whitney index is only defined modulo $2$ on the set of Gauss diagrams of real knots.

\section{A look at closed braid diagrams}
\label{sec:braids}

In this section we use $T-$diagrams to detect configurations of arrows which may not happen in the decorated Gauss diagram of a closed braid. The basic idea is that in a closed braid diagram, any nontrivial loop which respects the orientation must have positive homology class in $\RR\times\SS$.

For example, it is shown in \cite{F2} that the configuration on Fig.\ref{ex}a may not happen in a closed braid diagram. Another proof of this fact consists in finding a refinement, as pictured on Fig.\ref{ex}b, to see that the red loop has homology class $0$ in $\RR\times\SS$ since it avoids all the markings, though it always respects the orientation of the circle.

\begin{figure}[h!]
\centering 
\psfig{file=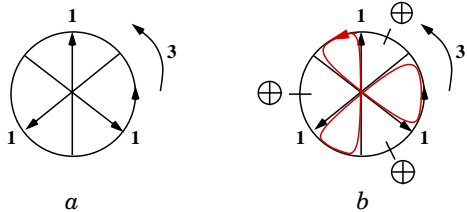,scale=1}
\caption{This configuration is not braid-admissible}\label{ex}
\end{figure}

\begin{definition} A decorated Gauss diagram $G$ is called \textit{braid-admissible}, or simply \textit{admissible} (\textsl{resp.} \textit{weakly admissible}), if every non-trivial loop in $G$ has positive (\textsl{resp.} non-negative) homology class in $\RR\times\SS$ (see Lemma \ref{hom}) as soon as it always respects the orientation of the circle - though it may go along the arrows in any direction.
\end{definition}

The rest of this section is devoted to the proof of the following:
\begin{theorem}
A decorated Gauss diagram is braid-admissible if and only if it may actually be represented by a virtual closed braid.
\end{theorem}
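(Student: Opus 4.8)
The plan is to route the equivalence through the notion of a \emph{positive} refinement, and to prove the theorem as a cycle of three implications:
\[
G \text{ admissible} \;\Longrightarrow\; G \text{ has a positive refinement} \;\Longrightarrow\; G \text{ is a virtual closed braid} \;\Longrightarrow\; G \text{ admissible}.
\]
Here a positive refinement means a $T$-diagram $\G$ refining $G$ whose markings are all positive (and at least one), so that $\G$ sits on a section that every strand crosses with the same sign. The third implication and the second are short; the first is where the real content lies.

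For the implication \emph{braid $\Rightarrow$ admissible}, I would use that a virtual closed braid comes with a section $S=\RR\times\{t\}$ that is \emph{monotone}: the $\SS$-coordinate strictly increases along the orientation, and arrow-jumps, which connect the two preimages of one crossing, preserve it. Hence along any loop that respects the orientation of the circle the $\SS$-coordinate is non-decreasing, and it strictly increases on a non-trivial loop (which must traverse a genuine forward arc and cannot be made of arrow-jumps alone). Since the homology class of such a loop equals the signed number of markings it meets, and all these markings are crossed positively, the class is a strictly positive integer. This is exactly admissibility.

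For \emph{positive refinement $\Rightarrow$ braid}, I would re-run the construction of Lemma \ref{Trep} while keeping track of heights: because every marking lies on $S$ with the same sign, each arc between two consecutive passages through $S$ can be drawn monotonically increasing in the $\SS$-direction, the arrows being realized as crossings at intermediate heights. The resulting tangle closes to a genuinely monotone diagram, \ie a virtual closed braid representing $\G$, hence $G$; only existence is needed, and the ordering and virtual-arc choices are harmless by the arguments following Lemma \ref{Trep}.

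The main obstacle is the remaining implication, \emph{admissible $\Rightarrow$ positive refinement}, which I would treat as an integer feasibility problem. Numbering the edges $e_1,\dots,e_{2n}$ and writing $M$ for the matrix of distinguished loops as in Proposition \ref{rmov}, I seek a marking vector $x\in\ZZ^{2n}$ with $Mx=v$ and $x>0$. By Lemma \ref{hom} the functional $x\mapsto\sum_{e\in C}x_e$ attached to any directed cycle $C$ of the graph built from the circle (forward edges) and the arrows (traversable either way) is constant on the fibre $x^0+\mathrm{Ker}\,M$ and equals the homology class of $C$; thus admissibility states precisely that \emph{every} such directed cycle has strictly positive homology. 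This is exactly the ``no non-positive oriented cycle'' hypothesis under which a feasible monotone potential exists: I would construct, by a Bellman--Ford / feasible-differential argument on the universal cover, a height function on the arc-endpoints that is non-decreasing along forward edges, equal at the two endpoints of each arrow, and with total increment the valuation of the circle, the positivity of all cycle homologies being what prevents any cycle from obstructing the potential; the markings are then recovered as its level crossings and are automatically positive. Integrality is free because the constraint matrix is a network (cycle) matrix, hence totally unimodular, in keeping with the integral kernel basis already obtained in Proposition \ref{rmov}. The converse failure is visibly dual: a directed loop of non-positive homology, like the red loop of Fig.\ref{ex}, is a Farkas certificate that no positive refinement can exist, so that such a diagram is indeed not braid-admissible.
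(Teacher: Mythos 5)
Your overall architecture --- routing the equivalence through positive refinements --- is the paper's own, and two of your three implications are essentially in order. The third (braid $\Rightarrow$ admissible, via monotonicity of the $\SS$-coordinate) is what the paper dismisses as trivial. The first (admissible $\Rightarrow$ positive refinement) via a feasible-potential/Bellman--Ford argument on the cyclic cover is a legitimate alternative to the paper's route, which instead proves a hand-built integral extension lemma (Lemma \ref{ext}, property $\mathfrak{P}$) and deduces Proposition \ref{pos} and Corollary \ref{ref}; your version is arguably more transparent, integrality coming for free from integer edge weights without any appeal to total unimodularity. One slip there: you should ask for $x\geq 0$ with $\sum x_i=$ (valuation of the circle) $\geq 1$, not $x>0$ in every coordinate --- a distinguished loop of valuation $1$ running through two edges already makes the latter infeasible, and a positive $T$-diagram only requires that all \emph{existing} markings be positive.

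The genuine gap is the middle implication: \enquote{positive refinement $\Rightarrow$ braid} is false as an unconditional statement. Take a single arrow whose head is immediately followed, along the orientation, by its tail with no marking in between, and put one positive marking on the complementary arc. This is a positive $T$-diagram, yet its underlying decorated Gauss diagram is not admissible (the arrow's distinguished loop is nontrivial, respects the orientation, and has homology class $0$), so by your own third implication it cannot be a closed braid; concretely, a crossing forces its two preimages to the same height, while monotonicity forces the height to increase strictly from the overpass forward to the underpass without meeting $S$. Your phrase \enquote{the arrows being realized as crossings at intermediate heights} conceals exactly the missing content: one must choose a single height per crossing compatible with the monotonicity of both strands through it, and such a choice exists if and only if every arrow has a well-defined level --- this is the substance of Propositions \ref{lev} and \ref{rep}, where admissibility has to be invoked a \emph{second} time, not just to produce the positive refinement. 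The cleanest repair within your scheme is not to discard the potential built in your first step: since it is non-decreasing along forward arcs and equal at the two endpoints of each arrow, it already is the required height assignment, and the braid can be drawn level by level from it; but the implication then proved is \enquote{admissible $\Rightarrow$ braid} directly, with the positive refinement appearing as a by-product rather than as an intermediate hypothesis strong enough to stand on its own.
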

\begin{proof} The \enquote{if} part is trivial. Propositions \ref{lev} and \ref{rep} together prove the converse for positive $T-$diagrams, and Corollary \ref{ref} shows that this is enough.
\end{proof}

We begin with a technical lemma that will be the key-point to the last part of the proof (Proposition \ref{pos}).

Set $M=\ZZ^n=\bigoplus\ZZ.e_i$ and call $\varepsilon\in M$ a \textit{unit} if for each $i$, $\varepsilon_i \in \left\lbrace -1,0,1\right\rbrace$. If $\varepsilon$ and $\eta$ are two units, we write $\varepsilon\leq\eta$ if for all $i$, $\varepsilon_i$ is either $0$ or equal to $\eta_i$. To any $x\in M$ is associated a unit $\varepsilon(x)$ defined by $\varepsilon_i(x)=\textsl{sign}(x_i)$, with the convention $\textsl{sign}(0)=0$. Let $V$ be a submodule of $M$ and $v\in V$. We define  $$V_v=V\cap\bigoplus \varepsilon_i(v)\NN e_i.$$
There are obvious equivalences: $$\begin{array}{ccccc}

 V_v\subset V_w & \Leftrightarrow & v\in V_w & \Leftrightarrow & \varepsilon(v)\leq\varepsilon(w). \end{array} $$

We say that $V$ has property $\mathfrak{P}$ if for each $x\in V$, $V_x$ is positively generated by the units it contains - that is, any element of the former is a sum of elements of the latter, each of which may be added more than once.

\begin{lemma}\label{ext}
With the above notations, assume that $V$ has property $\mathfrak{P}$, and assume that $v_0 \in V$ has no coordinate equal to $0$. Then any group homomorphism $\phi:V\rightarrow \ZZ$ such that $\phi(V_{v_0}) \subset \NN$ extends to a homomorphism $\Phi: M\rightarrow \ZZ$ such that $\Phi(M_{v_0}) \subset \NN$.
\end{lemma}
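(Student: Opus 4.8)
The plan is to reduce the statement to a nonnegative-extension problem for a single linear form, to prove real solvability by a Farkas-type argument, and then to upgrade this to an integral solution by induction on $n$, where the full force of property $\mathfrak{P}$ is needed. First I would normalise $v_0$. Flipping the sign of a coordinate ($e_i\mapsto -e_i$) is an automorphism of $M$ that preserves the notion of unit, the order $\leq$, the map $\varepsilon(\cdot)$, the submodules $V_x$, and hence property $\mathfrak{P}$, and it preserves all the hypotheses; so I may assume $v_0>0$, whence $M_{v_0}=\NN^n$ and $V_{v_0}=V\cap\NN^n$. Writing $a_i:=\Phi(e_i)$, the conclusion becomes: find $a\in\NN^n$ with $\langle a,g\rangle=\phi(g)$ for every $g\in V$, i.e. an integer point of the polyhedron $P=\{a\in\RR^n:\ a\geq 0,\ \langle a,g\rangle=\phi(g)\ \forall g\in V\}$.

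I would first show $P\neq\emptyset$ over $\RR$. The cone $C=V_{\RR}\cap\RR^n_{\geq 0}$ is rational polyhedral, hence generated over $\RR_{\geq 0}$ by the lattice points it contains, and those lattice points are exactly $V\cap\NN^n=V_{v_0}$. Since $\phi(V_{v_0})\subseteq\NN$, the $\RR$-linear extension of $\phi$ is nonnegative on every generator of $C$, hence on all of $C$; by Farkas' lemma this is precisely the condition for $P$ to contain a real point $a^{*}$. This step uses only the hypothesis $\phi(V_{v_0})\subseteq\NN$, not yet the full strength of $\mathfrak{P}$.

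It then remains to produce an \emph{integer} point, which I would do by induction on $n$. If $V=\ZZ^n$ we take $\Phi=\phi$, nonnegative on each $e_i$ because $e_i\in V_{v_0}$. If some $e_i\in V$, then $\phi(e_i)\geq 0$ and I restrict to the hyperplane $\{x_i=0\}$: the lattice $V'=V\cap\{x_i=0\}\subseteq\ZZ^{n-1}$ inherits property $\mathfrak{P}$ — its orthant-slices and the units they contain are exactly those of $V$ supported away from $i$ — and carries the restricted form, so the inductive hypothesis yields $\Phi'$, which I lift by setting $\Phi(e_i)=\phi(e_i)$. The last case is $e_i\notin V$ for all $i$. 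Here $\mathfrak{P}$ first shows $e_i\notin V_{\mathbb{Q}}$: if $d\,e_i\in V$ with $d$ minimal, then $V\cap\NN e_i=d\NN e_i$ must be positively generated by the units it contains, which forces $d=1$, contradicting $e_i\notin V$. Thus $\operatorname{rank}V<n$, and decomposing $v_0=\sum_t u_t$ into units of $V\cap\NN^n$ (available by $\mathfrak{P}$) furnishes a unit $u$ with $|\operatorname{supp}u|\geq 2$. Choosing $i\in\operatorname{supp}u$, I adjoin $e_i$ to get $\widetilde V=V\oplus\ZZ e_i$ (a direct sum, since $e_i\notin V_{\mathbb{Q}}$), extend $\phi$ by a value $\Phi(e_i)\in\NN$, and then restrict to $\{x_i=0\}$ to drop into dimension $n-1$.

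The main obstacle is exactly this last case, where two points must be secured. The lesser one is that a nonnegative integer value for $\Phi(e_i)$ exists keeping the restricted form admissible; the real solution $a^{*}$ of the Farkas step pins down a nonempty feasible interval for $\Phi(e_i)$, and I must argue this interval contains an integer. The more serious point is that the projected lattice $\widetilde V\cap\{x_i=0\}$ — equivalently $\widetilde V$ itself — must still enjoy property $\mathfrak{P}$, so that the induction can continue. This inheritance under enlarging-then-projecting is the combinatorial heart of the lemma, and it is precisely where the full strength of $\mathfrak{P}$ (unit-generation of \emph{every} orthant-slice of $V$, not merely of $V\cap\NN^n$) has to be exploited.
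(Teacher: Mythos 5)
Your overall strategy---adjoin the missing basis vectors one at a time, assign each a nonnegative integer value, and keep property $\mathfrak{P}$ alive so the process can continue---is essentially the paper's, but the proposal stops short of proving the two claims that constitute the whole content of the lemma, and you flag them yourself at the end: (a) that $W=V\oplus\ZZ e_i$ (equivalently, your projected lattice) still has property $\mathfrak{P}$, and (b) that the feasible interval for $\Phi(e_i)$ contains a nonnegative integer. Neither is routine, and the Farkas step does not help with (b): real feasibility of the polyhedron $P$ says nothing about integral feasibility (that is in general an integer-programming question), and a constraint coming from $w=v+ke_i\in W_0$ with $|k|\ge 2$ produces a bound $-\phi(v)/k$ with denominator $k$, so the a priori feasible interval could well be something like $\left[\tfrac12,\tfrac23\right]$, nonempty but integer-free. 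The paper resolves (b) by showing, using the unit decompositions guaranteed by $\mathfrak{P}$ for $W$, that every constraint with $|k|\ge 2$ is implied by the constraints with $k=\pm 1$; those reduce to $\sup\mathcal{K}_{+1}\le\Phi(e_i)\le\inf\mathcal{K}_{-1}$, where both bounds are values of $\phi$, hence integers, and compatibility follows from $(v_1+e_i)+(v_2-e_i)=v_1+v_2\in V_0$. Integrality is then automatic, with no rounding argument needed.

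Claim (a) is the combinatorial heart and requires an actual proof: the paper writes $w=v+ke_i$, decomposes $v$ into units of $V_v$, and runs a case analysis on the sign of $k$ versus $\varepsilon_i(v)$ and on $|k|$ versus the number of those units with nonzero $i$-th coordinate, producing an explicit positive decomposition of $w$ into units of $W_w$. Without this your induction has no engine, and note that (a) is also what powers the reduction in (b). Two smaller points: the lattice points of the cone $V_\RR\cap\RR^n_{\ge 0}$ form the saturation $V_\RR\cap\NN^n$, which may strictly contain $V\cap\NN^n$ (your generation claim can be repaired via rational extreme rays, but as written it is inaccurate); and the paper sidesteps your case analysis on whether some $e_i$ lies in $V$ by always adjoining an $e_i\notin V$ and iterating until $W=M$.
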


\begin{proof} 
For the sake of simplicity, we will write $V_0$, $W_0$, \textsl{etc.}, instead of $V_{v_0}$, $W_{v_0}$, \textsl{etc.}, and $\varepsilon (k)$ will stand for the sign of $k$. Fix $i$ such that $e_i\notin V$, and set $W=V \oplus \ZZ e_i$.
The proof is in two steps. First, we show that $W$ still has property $\mathfrak{P}$; then that $\phi$ extends to $\psi: W \rightarrow \ZZ$ such that $\psi(W_0) \subset \NN$. The lemma follows by iteration of these two steps.

Let $w=v+k.e_i\in W$ with $v\in V$ and $k\in \ZZ$. Write $v$ as a sum of units $v_l$ lying in $V_v$. Note that $\varepsilon_j(w)=\varepsilon_j(v)$ for $j\neq i$.

Case 1: $\varepsilon_i(v)\in \left\lbrace 0, \varepsilon(k)\right\rbrace $. Then the $v_l$'s also lie in $W_w$, and so does the unit $\varepsilon(k).e_i$, so we are happy with: $$w= \sum v_l + \lvert k\rvert \left( \varepsilon(k).e_i\right)$$.

Case 2: $\varepsilon_i(v)=-\varepsilon(k)$. Put $L:=\left\lbrace l\mid \varepsilon_i(v_l)\neq 0\right\rbrace$. Notice that:
\[\begin{array}{lcccl} 
& \forall l\in L, & v_l\in V_v & \Longrightarrow & \varepsilon_i(v_l)=-\varepsilon(k) \\
& & & \Longrightarrow & v_{l}+\varepsilon(k).e_i \text{ is a unit of } W_v,\\

\text{and also:} & & & & \\
& & & & \\
& \forall l\notin L, & \varepsilon(v_l)\leq \varepsilon(w) & \Longrightarrow & v_l\in W_w \\
& & & \Longrightarrow & v_l \text{ is a unit of } W_w,
\end{array} \]

$\triangleright$ If $\lvert k\rvert < \sharp L$, then $\varepsilon_i(w)=\varepsilon_i(v)$, which implies that $\varepsilon(v_l)\leq \varepsilon(w)$ and $v_l$ is a unit of $W_w$ even for $l\in L$, and if we pick any $k$ elements of $L$, say $l_1, \ldots, l_k$, then: $$w = \sum_{j=1}^k\left( v_{l_j}+\varepsilon(k).e_i\right)  + \sum_{l\in L\setminus \left\lbrace l_1,\ldots l_k\right\rbrace} v_l + \sum_{l\notin L} v_l.$$ 

$\triangleright$ If $\lvert k\rvert = \sharp L$, then simply: $$w = \sum_{l\in L}\left( v_{l}+\varepsilon(k).e_i\right) + \sum_{l\notin L} v_l.$$ 

$\triangleright$ If $\lvert k\rvert > \sharp L$, then $\varepsilon_i(w)=\varepsilon(k)$, which means $\varepsilon(k).e_i$ is a unit of $W_w$, and: $$w = \sum_{l\in L}\left( v_{l_j}+\varepsilon(k).e_i\right) + \sum_{l\notin L} v_l + \left( \lvert k\rvert-\sharp L\right) \left( \varepsilon(k).e_i\right) .$$ 
In any case we have a positive decomposition of $w$ along units of $W_w$.
So we have proved that $W$ still has property $\mathfrak{P}$.
\newline

We now want to set $\psi_{\mid V}=\phi$ and give a value to $\psi(e_i)$ so that $\psi\left( W_0\right) \subset \NN$. First, notice that since by assumption $v_0$ has no zero coordinate, either of $e_i$ and $-e_i$ lies in $W_0$, namely $\varepsilon_i(v_0).e_i$, so we need to ensure that $\psi(\varepsilon_i(v_0).e_i)\geq 0$. Besides this, every element $w=v+k.e_i\in W_0$ with $k\neq 0$ gives a condition, namely $\psi(e_i) \geq -\frac{1}{k}\psi(v)$ if $k>0$, $\psi(e_i) \leq -\frac{1}{k}\psi(v)$ if $k<0$.
Let us look first at the conditions yielded by $k=\pm 1$. If we set:
\[ \begin{array}{l}
\mathcal{K}_{-1}= \left\lbrace \phi(v) \mid v\in V, \hspace{0.1cm} v-e_i \in W_0\right\rbrace \text{, and} \\
\mathcal{K}_{+1}= \left\lbrace -\phi(v) \mid v\in V, \hspace{0.1cm} v+e_i \in W_0\right\rbrace, 
\end{array} \]
then all the $k=\pm 1$-conditions reduce to: $$\textsl{sup}\, \mathcal{K}_{+1} \leq \psi(e_i) \leq \textsl{inf}\, \mathcal{K}_{-1}.$$ 

The assumption that $v_0$ has no zero coordinate ensures that $\left\lbrace v_0\pm e_i \right\rbrace \subset W_{0}$, so that $\mathcal{K}_{-1} \text{ and } \mathcal{K}_{+1}$ are not empty, \ie $\, -\infty < \textsl{sup}\, \mathcal{K}_{+1}$ and $\textsl{inf} \,\mathcal{K}_{-1} < +\infty$.

Assume now that $\varepsilon_i(v_0)=+1$, and set $\psi(e_i)=\textsl{inf} \,\mathcal{K}_{-1}$. Were it $-1$, we would set instead $\psi(e_i)=\textsl{sup} \,\mathcal{K}_{+1}$ and everything would work exactly the same way.
Since $\varepsilon_i(v_0)>0$:
$$\begin{array}{cclc}\left[ v\in V,\hspace{0.1cm} v-e_i \in W_0\right] & \Longrightarrow & v\in V_0 &\hphantom{15cm} \\
& \Longrightarrow & \phi(v)\geq 0. &\hphantom{15cm} \\
\end{array}$$
This shows that $\psi(e_i)=\textsl{inf} \,\mathcal{K}_{-1}$ is non-negative: the condition $\psi(\varepsilon_i(v_0).e_i)\geq 0$ is filled.

Let $w_1=v_1+e_i$ and $w_2=v_2-e_i$ lie in $W_0$. Then:
$$\begin{array}{cccl}
\hphantom{15cm} & v_1+v_2=w_1+w_2 & \Longrightarrow & v_1+v_2\in W_0\cap V=V_0 \\
\hphantom{15cm} & & \Longrightarrow & -\phi(v_1)\leq \phi(v_2).

\end{array}$$
So we have proved that: $$\infty<\textsl{sup}\, \mathcal{K}_{+1}  \leq \psi(e_i)=\textsl{inf} \,\mathcal{K}_{-1} <+\infty,$$
and $\psi(e_i)$ is an integer with the required sign, satisfying all $k=\pm 1$-conditions.
\newline

Now we claim that these elementary conditions are enough for all the others to hold. Indeed, let $k$ be any integer and $v\in V$ such that $w=v+k.e_i\in W_0$. Throughout the proof that $W$ satisfies the property $\mathfrak{P}$, we actually showed that such a $w$ is a sum of units of $W_w$ of either of the forms $v_l+e_i$, $v_l-e_i$, $v_l$, $\varepsilon_i(v_0).e_i$, where the $v_l$'s stand for units of $V_v$. Since $w\in W_0\Rightarrow W_w\subset W_0$, such units have non-negative image by $\psi$, and therefrom so does $w$.
\end{proof}

\begin{remark}
There is a stronger version with no hypothesis on the coordinates of $v_0$, but we will not need it here.
\end{remark}

\begin{proposition}\label{pos} A decorated Gauss diagram has a non-negative refinement if and only if it is weakly admissible.
\end{proposition}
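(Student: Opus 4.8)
The statement has two implications, and only the second needs real work. The plan is to read Proposition~\ref{pos} as an integer-feasibility question and settle it with the extension Lemma~\ref{ext}. For the easy direction, suppose $G$ admits a non-negative refinement $\G$ and let $\gamma$ be any non-trivial loop respecting the orientation of the circle. By the Poincar\'e-duality remark its homology class in $\RR\times\SS$ is the sum of the markings $\gamma$ meets; since $\gamma$ runs along the circle only forward and the arrows carry no marking, each marking is counted with a $+$ sign, and as every marking of $\G$ is non-negative this sum is $\geq 0$. Hence $G$ is weakly admissible.

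For the converse I first set up the lattice bookkeeping of Proposition~\ref{rmov}. Number the $2n$ edges $e_1,\dots,e_{2n}$ of $G$ and, after reducing by type~I moves so that each edge carries markings of a single sign, identify a refinement with the vector $x\in\ZZ^{2n}$ recording the signed number of markings on each edge; a \emph{non-negative} refinement is then exactly an $x\in\NN^{2n}$. For an oriented loop $\gamma$ let $c_\gamma\in\ZZ^{2n}$ be its \emph{profile}, the net number of forward traversals of each edge; then the homology class of $\gamma$ is $h(\gamma)=\langle c_\gamma,x\rangle$ for every refinement $x$, so (Lemma~\ref{hom}) $h$ depends only on $c_\gamma$. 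Let $V\leq\ZZ^{2n}$ be the submodule spanned by the profiles of the circle and of all distinguished loops, i.e.\ the image of $H_1(G)$ under $\gamma\mapsto c_\gamma$, and define $\phi\colon V\to\ZZ$ by $\phi(c_\gamma)=h(\gamma)$. Fixing any refinement $x_0$ (one exists by the remarks), $\phi$ is simply $c\mapsto\langle c,x_0\rangle$, hence well defined and independent of $x_0$.

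Now comes the application. Take $v_0=c_{\text{circle}}=(1,\dots,1)\in V$, the profile of the circle itself: it lies in $V$ and has no zero coordinate, and $\varepsilon(v_0)=(1,\dots,1)$, so $V_{v_0}=V\cap\NN^{2n}$ while $M_{v_0}=\NN^{2n}$. The key observation is that weak admissibility is precisely the hypothesis $\phi(V_{v_0})\subseteq\NN$ of Lemma~\ref{ext}: an element of $V\cap\NN^{2n}$ is the profile of a loop that never runs backward along the circle, that is an orientation-respecting loop, so $\phi\geq 0$ on $V_{v_0}$ says exactly that every such loop has non-negative homology. Granting that $V$ has property $\mathfrak{P}$, Lemma~\ref{ext} then yields $\Phi\colon\ZZ^{2n}\to\ZZ$ extending $\phi$ with $\Phi(\NN^{2n})\subseteq\NN$. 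Reading $\Phi$ as the vector $x$ with $x_i=\Phi(e_i)$, we get $x_i\geq 0$ for all $i$ and, for each arrow $A$ with distinguished loop of profile $c_A$, $\langle c_A,x\rangle=\Phi(c_A)=\phi(c_A)=$ (valuation of $A$); thus $x\in\NN^{2n}$ is a non-negative refinement of $G$, completing the equivalence.

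The hard part will be to justify the two combinatorial inputs the core argument took on faith, both of which are instances of a flow-decomposition principle for the profile module $V$. First, I must check that the profiles of orientation-respecting loops are exactly $V\cap\NN^{2n}$: a non-negative profile has to be realizable by an honest loop that runs forward along the circle, the arrows (traversable in either direction) being used to close it up. Second, and this is the real obstacle, $V$ must satisfy property $\mathfrak{P}$, namely that for every $x\in V$ the profiles lying in the sign-orthant of $x$ are non-negative integer combinations of the unit profiles they dominate; equivalently, a consistently oriented integer circulation on $G$ decomposes into simple consistently oriented loops. I would prove both at once by induction on $\sum_i\lvert x_i\rvert$, repeatedly peeling off a simple loop compatible with the orientation prescribed by $\varepsilon(x)$, the only delicate point being that after projecting away the arrow-edges the extracted loop still lies in $V$, which holds because the arrows remain available to splice the chosen forward arcs into a closed curve.
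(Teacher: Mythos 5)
Your proposal is correct and follows essentially the same route as the paper: the same identification of refinements with vectors in $\ZZ^{2n}$, the same choice $v_0=\sum e_i$, the same translation of weak admissibility into $\phi(V\cap\NN^{2n})\subseteq\NN$, and the same application of Lemma~\ref{ext}, with property $\mathfrak{P}$ established by the same loop-peeling decomposition of cycles that the paper carries out (after shrinking the arrows to vertices). The only cosmetic difference is that you state an element of $V\cap\NN^{2n}$ is the profile of a single orientation-respecting loop where the paper more precisely says a \emph{sum} of such loops, but your own decomposition argument supplies exactly that.
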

\begin{corollary}\label{ref}
An admissible decorated Gauss diagram always has a positive refinement.
\end{corollary}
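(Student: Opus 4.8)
The plan is to read this off Proposition \ref{pos}, the only real content being to close the gap between a \emph{non-negative} refinement (which the proposition delivers and which is allowed to carry no marking at all) and a genuinely \emph{positive} one (at least one marking, all positive). Since an admissible decorated Gauss diagram is in particular weakly admissible, Proposition \ref{pos} immediately produces a non-negative refinement $\G$ of $G$. By the very definition of ``non-negative'', $\G$ is either already positive or else has no marking whatsoever, so it suffices to rule out the second alternative.

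To do this I would apply the full (strict) admissibility hypothesis to a single distinguished loop: the whole oriented circle of $G$. This loop is non-trivial and respects the orientation of the circle, so by hypothesis its homology class in $\RR\times\SS$ is strictly positive; but that homology class is by definition the valuation assigned to the circle of $G$. On the other hand, in any refinement the homology class of a loop equals the signed number of markings it meets (the Poincaré-duality remark recorded after the definition of a $T$-diagram), and the circle meets each marking exactly once. Hence the total signed number of markings of $\G$ equals the circle valuation, which is $>0$.

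It follows that $\G$ cannot be markingless: it has at least one marking, and since $\G$ is non-negative all of its markings are positive, so $\G$ is a positive refinement, as required. I do not expect any serious obstacle here; the whole argument is immediate once Proposition \ref{pos} is granted. The one point deserving explicit care is the observation that strict admissibility, tested on the circle loop alone, forces the circle valuation to be strictly positive, which is precisely what excludes the empty-marking refinement permitted by the weaker ``non-negative'' conclusion.
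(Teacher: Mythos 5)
Your argument is correct and is exactly the deduction the paper leaves implicit: Corollary \ref{ref} is stated with no proof of its own, the intended reading being that admissibility gives weak admissibility (hence a non-negative refinement by Proposition \ref{pos}), while the strictly positive homology class that admissibility forces on the circle loop equals the total of the markings and so excludes the markingless alternative. Nothing further is needed.
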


\begin{proof} The \enquote{only if} part is trivial. Let $G$ be a weakly admissible diagram, with $n$ arrows. Recall that the \textit{edges} of $G$ are the connected components of the complementary of its arrows.
We embed $H_1(G)$ into $\ZZ^{2n}=\bigoplus \ZZ e_i$ in the following way: number the edges of $G$ from $1$ to $2n$, then send every distinguished loop in $G$ to the sum of the $e_i$'s corresponding to the edges it goes through, and send the fundamental class of the circle to the sum of all $e_i$'s.
Call this embedding $\iota$, and set $V:=\iota (H_1(G))$. Since $\i$ defines an isomorphism between $V$ and $H_1(G)$, we may set:
\[
\begin{array}{rcccl}
\phi:& V & \longrightarrow & \ZZ= H_1(\RR\times\SS) &,\\
& \i (\gamma ) & \longmapsto & \left[ \gamma \right] &
\end{array}
\]
Now all we need to do is extend the map $\phi$ into an element of $\bigoplus \NN e_i^*$: the coefficient against $e_i^*$ will indicate how many markings we shall put on the $i$-th edge of $G$.

First, we see that $V$ satisfies the property $\mathfrak{P}$: indeed, this is a general fact about $1-$dimensional cellular spaces. Shrink every arrow of $G$ to a point (vertex), so that the $e_i$'s fully describe its cellular structure. Then let $v\in V$ write as $\i \left(x= \sum  x_i e_i\right) $. Let $\gamma$ be a path in $G$ corresponding as a $1-$chain to $\varepsilon_{i_1}(x) e_{i_1}$, where $i_1$ is such that $x_{i_1}\neq 0$. If the two ends of $e_{i_1}$ were joined by an arrow, then $\gamma$ is a loop and we stop here. Otherwise, since $x$ is a cycle, there must be some $i_2$ such that $\varepsilon_{i_2}(x) e_{i_2}$ is a path that starts where $\gamma$ ends. So we put $\gamma=\varepsilon_{i_1}(x) e_{i_1}+\varepsilon_{i_2}(x) e_{i_2}$, and iterate this process until $\gamma$ meets some vertex $A$ for the second time. Then we forget what happened before $\gamma$ first met $A$. What remains is a loop, whose image by $\iota$ is a unit of $V_v$. Repeating this with $v-\iota(\gamma)$, and so on, we split $v$ as a sum of units of $V_v$.

By the same process, we see that an element of $V\cap \NN^{2n}$ corresponds through $\i$ to the sum of the fundamental classes of loops in $G$ that can be chosen so as to respect orientation. So if we set $v_0=\sum e_i \in V$, then the weak admissibility of $G$ implies that $\phi$ takes $V_0=V\cap \NN^{2n}$ into $\NN$. Since $v_0$ has no zero coordinate, lemma \ref{ext} applies and gives the required extension.
\end{proof}

For the sequel, we will need a combinatorial tool highly inspired from the topology of braids:

\begin{definition} Let $D$ be a positive $T-$diagram. We say that an arrow has \textit{level $1$} if each of its endpoints is directly preceded by a (positive) marking. Remove every arrow of level $1$. Those of level 1 in the new diagram are said to have \textit{level $2$} in $D$. By induction we define the arrows of \textit{level $k$} in $D$. Of course some arrows may have no level at all.
\end{definition}

\begin{proposition}\label{lev} A positive $T-$diagram is admissible if and only if each of its arrows has a well-defined level.
\end{proposition}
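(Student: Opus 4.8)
The plan is to translate both directions of the equivalence into a single combinatorial condition---acyclicity of a directed graph built from the $T$-diagram.

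First I would simplify what admissibility means in the positive case. Since all markings are positive and an orientation-respecting loop traverses every circle-arc in the positive direction (meeting its markings with a $+$ sign), while an arrow meets no marking, the Poincar\'e-duality remark shows that the homology class in $\RR\times\SS$ of such a loop is exactly the number of markings it crosses, hence always $\geq 0$. Therefore, for a positive $T$-diagram, admissibility is equivalent to the statement that \emph{no} non-trivial orientation-respecting loop avoids all the markings. So it suffices to prove: every arrow has a level $\iff$ there is no non-trivial, orientation-respecting, marking-free loop in $\G$.

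Next I would encode such loops combinatorially. Following the device used in the proof of Proposition \ref{pos}, shrink each arrow to a point; the circle is then cut by the $2n$ arrow-endpoints into arcs, each either marked or unmarked. Build a directed graph $\vec{\G}$ whose vertices are the arrows and which has one edge $B\to A$ for every \emph{unmarked} arc running (positively) from an endpoint of $B$ into an endpoint of $A$. A directed cycle $A_1\to\cdots\to A_m\to A_1$ is readily turned into a loop: arrive at an endpoint of $A_{i+1}$ along the unmarked arc, jump along $A_{i+1}$ (allowed, since arrows may be traversed either way) to the endpoint from which the next arc departs, and repeat. This loop is orientation-respecting and marking-free; and since $\G$ shrinks to a graph, its first homology is its group of $1$-cycles, so a cycle carrying a positive coefficient on some arc is automatically non-trivial. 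Conversely, any non-trivial marking-free orientation-respecting loop is a non-zero $1$-cycle supported on unmarked arcs and arrows, with non-negative arc-coefficients; contracting the arrows and following the used arcs forward yields, by finiteness, a directed cycle in $\vec{\G}$. Thus non-trivial marking-free loops exist $\iff$ $\vec{\G}$ has a directed cycle.

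Then I would identify the level process with iterated source-removal in $\vec{\G}$. An endpoint is directly preceded by a marking exactly when the arc ending at it is marked, i.e. exactly when it carries no incoming edge of $\vec{\G}$; hence an arrow has level $1$ if and only if it is a \emph{source} of $\vec{\G}$. Deleting a level-$1$ arrow merges, at each of its (marked-preceded) endpoints, a marked arc with the following arc, so the merged arc stays marked: this is precisely the deletion of a source together with its out-edges. Iterating, the arrows acquiring levels $1,2,\dots$ are exactly the vertices peeled off by the standard topological sort, which exhausts $\vec{\G}$ if and only if $\vec{\G}$ is acyclic. Chaining the three equivalences gives: every arrow has a level $\iff$ $\vec{\G}$ is acyclic $\iff$ no non-trivial marking-free orientation-respecting loop exists $\iff$ $\G$ is admissible. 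The main obstacle I anticipate is bookkeeping rather than any deep point: checking that arc-merging under arrow-deletion corresponds cleanly to source-deletion (in particular that a merged arc remains marked), and verifying that loops produced from directed cycles are genuinely non-trivial in $H_1(\G)$ while, conversely, a non-negative non-zero $1$-cycle really does contain a directed cycle rather than cancelling to zero. Treating the degenerate cases separately (a single arrow whose endpoints bound an unmarked arc gives a self-loop of $\vec{\G}$, and the arrow-free diagram) should make these checks routine.
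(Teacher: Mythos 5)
Your proof is correct and is essentially the paper's argument in graph-theoretic clothing: the reduction of admissibility to the non-existence of marking-free orientation-respecting loops is identical, your iterated source-removal in $\vec{\G}$ is the paper's observation that deleting arrows preserves levels (so that among the arrows involved in a putative bad loop at least one must have level $1$, whose marked incoming arcs give the contradiction), and your extraction of a directed cycle from the non-exhausted part of $\vec{\G}$ is the paper's greedy construction of a marking-free loop when some arrow has no level. Nothing is missing, and the degenerate cases you flag (self-loops of $\vec{\G}$ and the arrow-free diagram) are exactly the ones that need separate checking.
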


\begin{proof} Note that if there is no arrow, then the diagram is admissible and the lemma is true. We assume from now on that $D$ has at least one arrow.

For positive diagrams, being admissible is equivalent to satisfying the property that every nontrivial 
and orientation respecting loop meets at least one marking. So assume that each arrow has a level, but that some loop fails to meet any marking. There must be at least one arrow involved in that loop. If not, then it would go all the way around the whole circle, and meet markings since the diagram is positive: that is a contradiction.
Remove the arrows which are not involved in the loop. Those remaining had a well defined level, so they still have one in the new diagram, and so at least one of them, say $A$, has level $1$ in the new diagram. It means that each endpoint of $A$ is directly preceded by a marking. Since our loop goes along $A$, and respects the orientation of the circle, it must meet one of them: that is a contradiction.

Conversely, assume that some arrow has no level, and remove all of those which have one. In this new diagram, pick a marking which is not directly followed by another, i.e. which is directly followed by the endpoint of an arrow. We start our loop at this endpoint. Go along the arrow. The other endpoint cannot be also preceded by a marking, or the arrow would have a level. So go back along the circle until we find the endpoint of an arrow that is directly preceded by a marking. Then iterate those two steps: we must loop, since there is a finite number of arrows, and the loop we created avoids every marking.
\end{proof}

\begin{proposition}\label{rep} A positive $T-$diagram can be represented by a virtual closed braid if and only if each of its arrows has a well-defined level.
\end{proposition}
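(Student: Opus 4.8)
The plan is to prove both implications separately, and to lean on Proposition \ref{lev} so that the whole statement becomes, in effect, a dictionary between \emph{admissibility} (controlled by homology) and \emph{braid-representability} (controlled by a height function). For the \enquote{only if} direction I would extract admissibility directly from the geometry of a braid and invoke Proposition \ref{lev}; for the \enquote{if} direction I would build a braid by induction on the number of arrows, reading the levels as successive layers of crossings and peeling off the bottom layer one arrow at a time.

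For the \enquote{only if} direction, suppose $D$ is represented by a virtual closed braid. The point is that in a closed braid the knot is monotone in the $\SS$-direction: moving forward along the knot strictly increases the $\SS$-coordinate, whereas the two endpoints of any arrow are preimages of a single crossing and hence share the same $\SS$-coordinate. Consequently an orientation-respecting loop $\gamma$ in $G$ lifts to an honest loop in $\RR\times\SS$ whose $\SS$-coordinate never decreases and strictly increases along every arc it runs through. If $\gamma$ is nontrivial in $G$ it traverses at least one arc of positive length, so its total $\SS$-variation is a strictly positive integer; this integer is precisely the homology class of $\gamma$ in $H_1(\RR\times\SS)$ (well-defined by Lemma \ref{hom}), which is therefore positive. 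Hence $D$ is admissible, and Proposition \ref{lev} immediately yields that every arrow has a well-defined level.

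For the \enquote{if} direction I would induct on the number $n$ of arrows. If $n=0$, the diagram is a single circle carrying $m\geq 1$ positive markings, which I represent by the braid on $m$ strands with no real crossing whose (unrecorded) virtual crossings realise the $m$-cycle permutation that the circle imposes on the markings; its closure is a knot. For the inductive step, pick an arrow $A$ of level $1$ and delete it. Deletion leaves the markings untouched and does not affect the distinguished loop of any other arrow, so the result $D'$ is again a positive $T$-diagram; moreover deleting an arrow only shrinks the family of orientation-respecting loops, so admissibility is preserved and Proposition \ref{lev} guarantees that every arrow of $D'$ still has a level. By induction $D'$ is represented by a virtual closed braid $B'$. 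Since $A$ has level $1$, each endpoint is directly preceded by a marking, say $p$ and $q$, which correspond to two strands leaving the section $S$ in $B'$; just above $S$ and below all of $B'$, I bring these two strands adjacent with virtual crossings and insert a real crossing of the prescribed sign between them. The resulting virtual closed braid has the same markings as $B'$ and the same real crossings together with $A$ in the required cyclic position, so it represents $D$.

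The main obstacle will be verifying that this final insertion genuinely reproduces $D$ as a $T$-diagram and not merely some diagram with one extra crossing. Concretely, I must check that placing the endpoints of $A$ immediately after the markings $p$ and $q$ — which is exactly what level $1$ encodes — matches the position of $A$ in $D$, that the auxiliary virtual crossings meet $S$ in no new point and hence leave every marking and every valuation intact, and that the underlying permutation still closes into a single knot. The freedom to introduce virtual crossings at will, furnished by the fundamental property of virtual crossings and already exploited in Lemma \ref{Trep} and Proposition \ref{Tunique}, is precisely what renders all of these adjustments harmless, so the remaining work is careful bookkeeping rather than a genuinely new idea.
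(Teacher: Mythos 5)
Your proof is correct, but it reaches both implications by a genuinely different route from the paper. For the \enquote{only if} direction the paper argues directly on the braid: the lowest real crossing can always be isotoped to the bottom and corresponds to a level-$1$ arrow, and one iterates after turning it virtual; you instead observe that monotonicity of a closed braid in the $\SS$-direction forces admissibility of the underlying decorated Gauss diagram (arrows contribute $0$ to the winding number, arcs contribute strictly positively) and then invoke Proposition \ref{lev}. This is clean and arguably more robust, at the mild cost of making Proposition \ref{rep} logically dependent on Proposition \ref{lev}, which the paper keeps separate (harmless, since \ref{lev} is proved independently and the two are only ever used together). For the \enquote{if} direction the paper gives a one-shot construction: cut the circle at all $k$ markings, embed the resulting segments monotonically in $\RR\times[0,l+1]$ with every arrow of level $i$ realised as a real crossing at height $i$, and append a totally virtual permutation braid at the top; you instead induct on the number of arrows, peeling off one level-$1$ arrow at a time and reinserting it as a real crossing in a thin band just above the section, conjugated by virtual crossings so the permutation and all other data are untouched. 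The two constructions encode the same idea --- the level is a height function --- but the paper's version exhibits the whole braid word at once, while yours localises all the verification to a single insertion, and your use of \enquote{deleting an arrow shrinks the set of orientation-respecting loops, hence preserves admissibility, hence preserves levels via Proposition \ref{lev}} neatly sidesteps having to check directly that levels survive the deletion of a single level-$1$ arrow. The bookkeeping you defer (that the inserted crossing lands directly after the markings $p$ and $q$, that the auxiliary virtual crossings avoid $S$, and that the closure remains a single knot) all goes through as you anticipate.
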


\textbf{Note:} it is to be understood that an isotopy of a braid diagram always stays within the set of braid diagrams - and as usual, \mbox{involves no Reidemeister move.}

\begin{proof} Every real crossing of a virtual braid which may be isotoped into the lowest corresponds to an arrow of level $1$ in the associated $T-$diagram. Removing these arrows amounts to replacing the real crossings by virtual ones. Until there is no more real crossing, there will always be a lowest one. This proves the \enquote{only if} part.

Assume that each arrow of a positive diagram has a level. Let $k$ be the global marking of the circle, and let $l$ be the maximal level of the arrows.
Cut the circle at every marking, so as to obtain a diagram based on a union of $k$ segments, and embed it into $\RR\times \left[0,l+1\right]$, 
in such a way that the segments are oriented from bottom to top, and such that each arrow of level $i$ is contained in $\RR\times \left\lbrace i\right\rbrace $. 
The fact that each arrow has a level is equivalent to the existence of such an embedding. Now at each level make the strands cross each other as indicated 
by the arrows, by a homotopy that keeps the $i+1/2$-levels untouched. Declare virtual every additional crossing needed to do that. Finally, add to the top a totally virtual braid corresponding to the permutation defined by the way the strands were originally glued together in the circle, so that the resulting braid closes into a knot with the required $T-$diagram.
\end{proof}

Institut de Mathematiques de Toulouse

Universite Paul Sabatier et CNRS (UMR 5219)

118, route de Narbonne

31062 Toulouse Cedex 09, France

mortier@math.ups-tlse.fr

\end{document}